\newtheorem{theorem}{Theorem}[section]
\newtheorem{proposition}[theorem]{Proposition}
\newtheorem{corollary}[theorem]{Corollary}
\newtheorem{lemma}[theorem]{Lemma}
\theoremstyle{definition}
\newtheorem{definition}[theorem]{Definition}
\theoremstyle{remark}
\newtheorem{remark}[theorem]{Remark}
\numberwithin{equation}{section}
\def\AA{{\mathbb A}}
\newcommand{\RR}{{\mathbb R}}
\newcommand{\TT}{{\mathbb T}}
\newcommand{\ZZ}{{\mathbb Z}}
\newcommand{\norm}[1]{\|{#1}\|}
\def\epsilon{\varepsilon}
\def\Dif{ {\mbox{\rm D}} }
\def\df{\dot{f}}       % derivative of f
\def\ddf{\ddot{f}}     % 2nd derivative of f
\def\to{t}        % Old t
\def\Pto{\bar t}  % New t
\def\vo{v}        % Old v
\def\Pvo{\bar v}  % New v
\def\Eo{e}        % Old E
\def\PEo{\bar e}  % New E
\begin{document}

\title{Some remarks on the periodic motions of a bouncing ball}

\author{Stefano Mar\`o}
\address{Universidad de Oviedo, Departamento de Matem\'aticas, Calle Leopoldo Calvos Sotelo 18, 33007 Oviedo, Spain}
%\curraddr{}
\email{marostefano@uniovi.es}

\thanks{This work has been supported by the project PID2021-128418NA-I00 awarded by the Spanish Ministry of Science and Innovation. The work is also part of the author's activity within the DinAmicI community (www.dinamici.org) and the Gruppo Nazionale di Fisica Matematica, INdAM}

\begin{abstract}
We consider the vertical motion of a free falling ball bouncing elastically on a racket moving in the vertical direction according to a regular $1$-periodic function $f$. For fixed coprime $p,q$ we study existence, stability in the sense of Lyapunov and multiplicity of $p$ periodic motions making $q$ bounces in a period. If $f$ is real analytic we prove that one periodic motion is unstable and give some information on the set of these motions.
\end{abstract}
\maketitle

%\tableofcontents

\section{Introduction}
\noindent The vertical dynamics of a free falling ball on a moving racket is considered. The racket is supposed to
move periodically in the vertical direction according to a regular periodic function $f(t)$ and
the ball is reflected according to the law of elastic bouncing when hitting the
racket. The only force acting on the ball is the gravity $g$. Moreover, the mass of the racket is assumed to be large with respect to the mass of the
ball so that the impacts do not affect the motion of the racket.

\noindent This model has inspired many authors as it represents a simple model exhibiting complex dynamics, depending on the properties of the function $f$. The first results were given by Pustyl'nikov in \cite{pust} who studied the possibility of having motions with velocity tending to infinity, for $\df$ large enough. On the other hand, KAM theory implies that if the $C^k$ norm, for $k$ large, of $\df$ is small then all motions are bounded. On these lines some recent results are given in \cite{ma_xu,maro4,maro6}. Bounded motions can be regular (periodic and quasiperiodic, see \cite{maro3}) and chaotic (see \cite{maro5,maro2,ruiztorres}). Moreover, the non periodic case is studied in \cite{kunzeortega2,ortegakunze}, the case of different potentials is considered in \cite{dolgo} and recent results on ergodic properties are present in \cite{studolgo}.      

In this paper we are concerned with $(p,q)$-periodic motions understood as $p$-periodic with $q$ bounces in each period. Here $p,q$ are supposed to be positive coprime integers. In \cite{maro3} it is proved that if $p/q$ is sufficiently large, then there exists at least one $(p,q)$-periodic motion. This result comes from an application of Aubry-Mather theory as presented in \cite{bangert}. Actually, the bouncing motions correspond to the orbits of an exact symplectic twist map of the cylinder. The orbits of such maps can be found as critical points of an action functional and the $(p,q)$-periodic orbits found in \cite{maro3} correspond to minima. Here we first note that a refined version of Aubry-Mather theory (see \cite{katok_hass}) gives, for each couple of coprime $p,q$ such that $p/q$ is large, the existence of another $(p,q)$-periodic orbit that is not a minimum, since it is found via a minimax argument. This gives the existence of two different $(p,q)$-periodic motions for fixed values of $p,q$, with $p/q$ large.

We are first interested in the stability in the sense of Lyapunov of such periodic motions. This is related to the structure of the $(p,q)$-periodic orbits of the corresponding exact symplectic twist map, for fixed $p,q$. It comes from Aubry-Mather theory that the $(p,q)$-periodic orbits that are minima can be ordered and if there are two with no other in the middle then they are connected by heteroclinic orbits. In this case they are unstable. On the other hand, $(p,q)$-periodic orbits can form an invariant curve. In this case they are all minima but their stability cannot be determined as before since we are in a degenerate scenario. However, if we suppose to be in the real analytic case, a topological argument (see \cite{ortega_fp,ortega_book}) can be used to deduce instability. More precisely we will use the fact that for a real analytic area and orientation preserving embedding of the plane that is not the identity, every stable fixed point is isolated. Therefore, the hypothesis of $f$ being real analytic comes here into play.

Concerning the structure of the set of $(p,q)$-periodic motions we prove that in the real analytic case they can only be either isolated or in a degenerate case, in the sense that the corresponding orbits form an invariant curve that is the graph of a real analytic function. As before, in the isolated case at least one is unstable and in the degenerate case they all are minima and unstable. Note that this result differs from Aubry-Mather theory since we are not requiring the orbits to be minima of a functional. To prove this result we need the $q$-th iterate of the map to be twist. For $q=1$ this is true for every real analytic $f$, while for the general case $q>1$ we need to restrict to $\norm{\ddf}$ being small.

The paper is organized as follows. In Section \ref{sec:theory} we recall some known facts about exact symplectic twist maps together with the results for the analytic case. In Section \ref{sec:tennis} we introduce the bouncing ball map and describe its main properties. Finally, the results on the existence of two $(p,q)$-periodic motions, the instability and the structure of the set are given in Section \ref{sec:per}.

\section{Some results on periodic orbits of exact symplectic twist maps}\label{sec:theory}
Let us denote by $\Sigma=\RR\times(a,b)$ with $-\infty\leq a<b\leq +\infty$ a possibly unbounded strip of $\RR^2$. We will deal with $C^k$ ($k\geq 1$) or real analytic embeddings $\tilde{S}:\Sigma \rightarrow \RR^2$ such that
\begin{equation}\label{def_cyl}
\tilde{S}\circ\sigma=\sigma\circ \tilde{S}
\end{equation}
where $\sigma:\RR^2\rightarrow\RR^2$ and $\sigma(x,y)=(x+1,y)$. By this latter property, $\tilde{S}$ can be seen as the lift of an embedding $S:\Sigma\rightarrow\AA$ where $\AA = \TT\times \RR$ with $\TT = \RR/\ZZ$ and $\Sigma$ is now understood as the corresponding strip of the cylinder. We denote $\tilde{S}(x,y)=(\bar{x},\bar{y})$ and the corresponding orbit by $(x_n,y_n)_{n\in\ZZ}$. 

 We say that $\tilde{S}$ is exact symplectic if there exists a $C^1$ function $V:\Sigma\rightarrow \RR$ such that $V\circ\sigma=V$ and
\[
\bar{y} d \bar{x} -y dx = dV(x,y) \quad\mbox{in }\Sigma.
\]
Moreover, by the (positive) twist condition we understand
\[
\frac{\partial \bar{x}}{\partial y} >0 \quad\mbox{in }\Sigma. 
\]
A negative twist condition would give analogous results. The exact symplectic condition implies that $\tilde{S}$ preserves the two-form $dy\wedge dx$ so that it is area and orientation preserving.
An equivalent characterization is the existence of a generating function, i.e. a $C^2$ function $h:\Omega\subset \RR^2\rightarrow \RR$ such that $h(x+1,\bar{x}+1) = h(x,\bar{x})$ and $h_{12}(x,\bar{x}) <0$ in $\Omega$ and for $(x,y)\in\Sigma$ we have $\tilde{S}(x,y) = (\bar{x},\bar{y})$ if and only if 
    \[
    \left\{
    \begin{split}
      h_1(x,\bar{x})&=-y \\
      h_2(x,\bar{x})&=\bar{y}.
    \end{split}
    \right.
    \]
Moreover, $\tilde{S}$ preserves the ends of the cylinder if, uniformly in $x$,
\[
\bar{y}(x,y) \rightarrow \pm\infty \quad\mbox{as }y\rightarrow \pm\infty
\]
and twists each end infinitely if, uniformly in $x$,
\[
\bar{x}(x,y)-x \rightarrow \pm\infty \quad\mbox{as }y\rightarrow \pm\infty.
\]
%
%Finally we say that two embeddings $\tilde{S}_1,\tilde{S}_2:\Sigma\rightarrow\RR^2$ satisfying condition \eqref{def_cyl} are isotopic if there exists a continuous function $H:\Sigma^2\times[0,1]\rightarrow\RR^2$ such that $H(\sigma(x,y),\lambda)=\sigma(H(x,y,\lambda))$ for every $\lambda\in[0,1]$ and $(x,y)\in\Sigma$, $H(\cdot,0)=\tilde{S}_1$, $H(\cdot,1)=\tilde{S}_2$ and for every $\lambda\in[0,1]$ the functions $H(\cdot,\lambda)$ are homeomorphisms.
Finally we will say that an embedding of the cylinder $S:\Sigma\rightarrow\AA$ satisfies any of these properties if so does its corresponding lift.

These maps enjoy several properties and many interesting orbits are proved to exist. Here we recall some results concerning periodic orbits. We start with the following
\begin{definition}
  Fix two coprime integers $p,q$ with $q\neq 0$. An orbit $(x_n,y_n)_{n\in\ZZ}$ of $\tilde{S}$ is said $(p,q)$-periodic if $(x_{n+q},y_{n+q})=(x_n+p,y_n)$ for every $n\in\ZZ$. Moreover, we say that it is stable (in the sense of Lyapunov) if for every $\varepsilon>0$ there exists $\delta>0$ such that for every $(\hat{x}_0,\hat{y}_0)$ satisfying $|(x_0,y_0)-(\hat{x}_0,\hat{y}_0)|<\delta$ we have $|\tilde{S}^n(\hat{x}_0,\hat{y}_0)-({x}_n,{y}_n)|<\varepsilon$ for every $n\in\ZZ$. 
\end{definition}

\begin{remark}\label{rem_periodic}
  Note that $(q,p)$-periodic orbits correspond to fixed points of the map $\sigma^{-p}\circ \tilde{S}^q$. This follows from the fact that $\tilde{S}$ is a diffeomorphism defined on the cylinder. Each point of the orbit is a fixed point of $\sigma^{-p}\circ\tilde{S}^q$ and a fixed point of $\sigma^{-p}\circ\tilde{S}^q$ is the initial condition of a $(p,q)$-periodic orbit. Note that different fixed points may correspond to the same orbit but not viceversa. Moreover, if an orbit is $(q,p)$-periodic then it cannot be also $(q',p')$-periodic unless $p/q=p'/q'$. Actually, let $\xi=(x,y)$ and suppose that $\xi=\sigma^{-p}\circ\Phi^q(\xi)=\sigma^{-p'}\circ \Phi^{q'}(\xi)$. Then $\sigma^{p}(\xi)=\Phi^q(\xi)$ and $\sigma^{p'}(\xi)=\Phi^{q'}(\xi)$ from which $\Phi^{qq'}(\xi)=\sigma^{pq'}(\xi)=\sigma^{p'q}(\xi)$ and $pq'=p'q$.   
  Finally, the stability of a $(q,p)$-periodic orbit corresponds to the stability in the sense of Lyapunov of the corresponding fixed point of the map $\sigma^{-p}\circ\tilde{S}^q$.
\end{remark}

A particular class of periodic orbits are the so called Birkhoff periodic orbits.
\begin{definition}
  Fix two coprime integers $p,q$ with $q\neq 0$. An orbit $(x_n,y_n)_{n\in\ZZ}$ of $\tilde{S}$ is said a Birkhoff $(p,q)$-periodic orbit if there exists a sequence $(s_n,u_n)_{n\in\ZZ}$ such that
  \begin{itemize}
  \item $(s_0,u_0)=(x_0,y_0)$ 
  \item $s_{n+1}>s_n$ 
  \item $(s_{n+q},u_{n+q})=(s_{n}+1,u_{n})$
  \item $(s_{n+p},u_{n+p})=\tilde{S}(s_{n},u_{n})$
    \end{itemize}
 \end{definition}
  \begin{remark}
    Note that a Birkhoff $(p,q)$-periodic orbit is a $(p,q)$-periodic orbit since
    \[
(x_{n+q},y_{n+q})=(s_{np+qp},u_{np+qp})=(s_{np}+p,u_{np})=\tilde{S}^n(s_0,u_0)+(p,0)=(x_{n}+p,y_{n}).
    \]
    \end{remark}

The existence of Birkhoff $(p,q)$-periodic orbits comes from Aubry-Mather theory. Here we give a related result \cite[Th. 9.3.7]{katok_hass}

\begin{theorem}\label{birk_orbits}
  Let $S:\AA\rightarrow\AA$ be an exact symplectic twist diffeomorphism that preserves and twists the ends infinitely and let $p,q$ be two coprime integers. Then there exist at least two Birkhoff $(p,q)$-periodic orbits for $S$. 
\end{theorem}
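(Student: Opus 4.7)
The plan is to recast the search for Birkhoff $(p,q)$-periodic orbits as a variational problem for the generating function $h$, and to produce two distinct critical points, one via minimisation and one via a minimax argument, both of which can be shown to be Birkhoff.

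Concretely, I would work in the space
\[
X_{p,q}=\{\mathbf{x}=(x_n)_{n\in\ZZ}:\ x_{n+q}=x_n+p\},
\]
parametrised by $(x_0,\dots,x_{q-1})\in\RR^q$, and consider the action
\[
F(\mathbf{x})=\sum_{n=0}^{q-1}h(x_n,x_{n+1}).
\]
A direct computation using $h_1(x,\bar{x})=-y$, $h_2(x,\bar{x})=\bar{y}$ shows that critical points of $F$ correspond exactly to $(p,q)$-periodic orbits of $\tilde{S}$ through $y_n=-h_1(x_n,x_{n+1})$. The group $\ZZ$ acts on $X_{p,q}$ by simultaneous translation $T\mathbf{x}=\mathbf{x}+(1,\dots,1)$, and $F$ is $T$-invariant by the relation $h(x+1,\bar{x}+1)=h(x,\bar{x})$, so the natural arena is $X_{p,q}/\ZZ$.

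The first critical point will be obtained as a minimum of $F$ on $X_{p,q}/\ZZ$. Coercivity in each slice direction $x_{n+1}-x_n$ needs to be established; here one uses that the hypothesis that $\tilde{S}$ preserves and twists the ends infinitely translates, via $h_2(x,\bar{x})=\bar{y}$ together with $h_{12}<0$, into the statement that $h(x,\bar{x})\to+\infty$ as $|\bar{x}-x|\to\infty$, uniformly in $x$. Combined with $T$-invariance, this yields existence of a minimiser $\mathbf{x}^*$. A standard application of the Aubry crossing lemma (two minimising configurations cannot cross more than once when $h_{12}<0$) then shows that $\mathbf{x}^*$ and all its $\ZZ^2$-translates are totally ordered, so that rearranging them produces the monotone sequence $(s_n,u_n)$ required in the definition of a Birkhoff orbit.

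For the second orbit, I would run a mountain-pass argument. Working on the universal cover $X_{p,q}$, pick two lifts $\mathbf{x}^*$ and $T\mathbf{x}^*$ of the minimiser, which represent the same orbit in the quotient but are distinct in $X_{p,q}$. Because $F$ is coercive and the sublevel sets on each component of the space of paths are compact (by the uniform growth of $h$ at infinity), the mountain-pass value
\[
c=\inf_{\gamma\in\Gamma}\max_{t\in[0,1]}F(\gamma(t)),\qquad \Gamma=\{\gamma\in C([0,1],X_{p,q}):\gamma(0)=\mathbf{x}^*,\ \gamma(1)=T\mathbf{x}^*\},
\]
is finite and strictly larger than $F(\mathbf{x}^*)$; a Palais--Smale sequence at level $c$ is precompact modulo $T$ and converges to a new critical point $\mathbf{x}^{**}$. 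Since $F(\mathbf{x}^{**})>F(\mathbf{x}^*)$, the orbit it produces is distinct from the minimiser.

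The main obstacle is showing that $\mathbf{x}^{**}$ is itself Birkhoff, since the Aubry crossing lemma is tailored to minima rather than saddle points. I would handle this by choosing the minimax class $\Gamma$ within the cone of monotone configurations $\{\mathbf{x}:x_n\le x_{n+1}\text{ for all }n\}$, which is invariant under the negative gradient flow of $F$ thanks to the twist sign $h_{12}<0$ (a non-decreasing sequence cannot become decreasing along the flow). Deformation therefore stays in the cone, and the resulting critical point is automatically monotone and hence Birkhoff. Checking that this invariance of the cone and the associated compactness survive the infinite-twist assumption is the technical heart of the argument; once that is in hand, $\mathbf{x}^*$ and $\mathbf{x}^{**}$ yield the two required Birkhoff $(p,q)$-periodic orbits.
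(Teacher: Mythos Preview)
Your proposal is essentially the proof the paper invokes: the paper does not argue this theorem itself but cites it as \cite[Th.~9.3.7]{katok_hass}, adding in the subsequent remark that the argument is variational, with one orbit obtained as a minimum of the action built from the generating function and the other by a minimax. Your outline follows exactly that route.

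One technical point to tighten: the cone $\{\mathbf{x}:x_n\le x_{n+1}\}$ is not in general the correct ``ordered region'' for Birkhoff $(p,q)$-orbits once $p>1$. For instance, with $(p,q)=(3,2)$ the Birkhoff condition forces $x_0+1<x_1<x_0+2$, whereas mere monotonicity together with $x_2=x_0+3$ only gives $x_0<x_1<x_0+3$. The set that is actually forward-invariant under the negative gradient flow (via $h_{12}<0$) and that characterises Birkhoff configurations is the set of $\mathbf{x}$ that are cyclically ordered together with all their integer translates, i.e.\ those for which, for every $(m,k)\in\ZZ^2$, the sequence $(x_{n+m}+k-x_n)_n$ has constant sign. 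Running the minimax inside this region, rather than the monotone cone, is what yields a second critical point that is genuinely Birkhoff; with that correction your argument matches the cited proof.
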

\begin{remark}
The Theorem is proved via variational techniques. The periodic orbits correspond to critical points of an action defined in terms of the generating function. One of this point is a minimum and the other is a minimax if the critical points are isolated. 
  \end{remark}

In the analytic case, something more can be said on the topology of these orbits.

\begin{proposition}\label{unstable}
Let $\tilde{S}:\Sigma\rightarrow\RR^2$ be an exact symplectic twist embedding satisfying condition \eqref{def_cyl} and admitting a $(p,q)$-periodic orbit. Then there exists at least one $(p,q)$-periodic orbit that is unstable. 
\end{proposition}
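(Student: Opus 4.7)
The plan is to pass to the map $\Psi := \sigma^{-p}\circ\tilde{S}^q$ and exploit the topological theorem of Ortega recalled in the introduction. By Remark~\ref{rem_periodic}, each $(p,q)$-periodic orbit of $\tilde{S}$ consists of fixed points of $\Psi$, every fixed point of $\Psi$ is the starting point of a $(p,q)$-periodic orbit, and Lyapunov stability of the orbit is equivalent to Lyapunov stability of any one of those fixed points. Since the composition of real-analytic exact symplectic embeddings is again real-analytic and exact symplectic, $\Psi:\Sigma\to\RR^2$ is a real-analytic, area and orientation preserving embedding. A short check using the twist hypothesis $\partial\bar{x}/\partial y>0$ shows that $\Psi\neq\id$: otherwise $\tilde{S}^q=\sigma^p$ would be a pure horizontal translation, which is incompatible with the strict monotonicity in $y$ of the first coordinate of iterates of a positive twist map.

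With $\Psi$ satisfying the hypotheses of Ortega's theorem, every stable fixed point of $\Psi$ is isolated. Now $\mathrm{Fix}(\Psi)$ is a real-analytic subset of $\Sigma$; since $\Psi\neq\id$ it is a proper subset, and therefore each of its connected components is either an isolated point or a one-dimensional analytic arc. If some component is one-dimensional, every point on it is a non-isolated fixed point of $\Psi$; by the contrapositive of Ortega's theorem such a point is Lyapunov unstable, and the corresponding $(p,q)$-periodic orbit of $\tilde{S}$ is unstable. This already concludes the proof in the ``degenerate'' case.

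The main obstacle is the complementary case in which all fixed points of $\Psi$ coming from $(p,q)$-periodic orbits are isolated. Here the plan is to fall back on the variational interpretation: the orbits are critical points of the discrete action $W(x_0,\dots,x_{q-1})=\sum_{i=0}^{q-1}h(x_i,x_{i+1})$ with $x_q=x_0+p$, where $h$ is the generating function of $\tilde{S}$. The twist condition $h_{12}<0$ prevents non-degenerate critical points from being local maxima, so a mountain-pass/minimax argument between ordered minima should produce a saddle-type critical point; as an isolated fixed point of the area and orientation preserving map $\Psi$ it has Brouwer index different from $+1$, whereas an isolated stable fixed point of such a map must have Brouwer index $+1$, so the saddle is Lyapunov unstable. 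The delicate point will be to justify the existence of this minimax orbit from the bare hypothesis that one $(p,q)$-periodic orbit exists, that is, without invoking the end-behaviour hypotheses of Theorem~\ref{birk_orbits}.
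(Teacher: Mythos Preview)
Your setup and the non-isolated (``degenerate'') case match the paper exactly: pass to $\Psi=\sigma^{-p}\circ\tilde{S}^q$, observe it is a real-analytic area and orientation preserving embedding, use the twist to rule out $\Psi=\id$, and invoke Ortega's theorem so that any non-isolated fixed point is automatically unstable.

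The gap is in your treatment of the isolated case. You propose a variational/minimax argument on the discrete action to produce a saddle-type critical point, but you yourself flag that this requires producing a second critical point of mountain-pass type from the mere existence of one $(p,q)$-periodic orbit, without the coercivity/end-behaviour hypotheses of Theorem~\ref{birk_orbits}. That step is not carried out, and in this generality it is not clear it can be: on the strip $\Sigma$ the action need not satisfy the Palais--Smale-type conditions that make the minimax go through, and there is no a priori ordering of minima to run a mountain-pass between. As written, the isolated case remains open.

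The paper closes this case with a purely topological argument that bypasses variational methods entirely. Since $\Psi$ commutes with $\sigma$, it descends to an area and orientation preserving embedding of the cylinder, whose Euler characteristic is zero. If every fixed point were isolated and stable, each would have fixed-point index $+1$ (this is \cite[Chapter~4.5, Theorem~12]{ortega_book}); but then the Poincar\'e--Hopf formula forces the sum of indices to vanish, which is impossible with all indices equal to $+1$ and at least one fixed point present. Hence some isolated fixed point is unstable. This index-sum argument is both shorter and uses only the standing hypotheses of the proposition; you should replace your variational sketch with it.
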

\begin{proof}
  The proof is essentially given in \cite{maroTMNA,ortega_fp,ortega_book}. We give here a sketch. It is enough to prove that there exists at least one unstable fixed point of the area and orientation preserving 1-1 real analytic map $\sigma^{-p}\circ \tilde{S}^q$. Let us first note that since $\tilde{S}$ is twist, the map $\sigma^{-p}\circ \tilde{S}^q$ is not the identity. Actually, it is known (see for example \cite{herman})) that if $\tilde{S}$ is twist, then the image of a vertical line under $\tilde{S}^q$ is a positive path, i.e. a curve such that the angle between the tangent vector and the vertical is always positive. This implies that $\tilde{S}^q$ cannot be a horizontal translation.\\
By hypothesis the set of fixed points of $\sigma^{-p}\circ \tilde{S}^q$ is not empty so that applying \cite[Chapter 4.9, Theorem 15]{ortega_book} we deduce that every stable fixed point is an isolated fixed point.\\
Hence, if there exists some non isolated fixed point, it must be unstable. Finally, suppose that we only have isolated fixed points that are all stable. From \cite[Chapter 4.5, Theorem 12]{ortega_book} they must all have index $1$. On the other hand, the Euler characteristic of the cylinder is null, and by the Poincar\'e-Hopf index formula we have a contradiction. Hence, there must exists at least one fixed point that is unstable.
   \end{proof}

\begin{corollary}
  In the conditions of Theorem \ref{birk_orbits}, if $S$ is real analytic then there exists at least one unstable $(p,q)$-periodic orbit.
\end{corollary}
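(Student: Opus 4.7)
The plan is to obtain the statement as an immediate consequence of the two results just established, combining their hypotheses and conclusions. I would proceed in three short steps.

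First, I would apply Theorem \ref{birk_orbits} to the real analytic map $S$ to conclude that it admits at least one (in fact two) Birkhoff $(p,q)$-periodic orbit. By the remark following the definition of Birkhoff periodic orbits, any such orbit is in particular a $(p,q)$-periodic orbit of $\tilde{S}$, so the lift $\tilde{S}$ admits a $(p,q)$-periodic orbit.

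Second, I would verify that $\tilde{S}$ meets all the hypotheses needed to invoke Proposition \ref{unstable}: it is real analytic because $S$ is; it is an exact symplectic twist embedding satisfying the equivariance \eqref{def_cyl} because these properties pass from $S$ to its lift by the very definition of the cylinder maps used in Section \ref{sec:theory}; and it admits a $(p,q)$-periodic orbit by the previous step. Applying Proposition \ref{unstable} to $\tilde{S}$ then yields at least one unstable $(p,q)$-periodic orbit, which is the claim of the corollary.

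There is no real obstacle here: the substantive work sits inside Theorem \ref{birk_orbits} (the Aubry--Mather existence of Birkhoff orbits) and Proposition \ref{unstable} (the topological instability argument combining isolation of stable fixed points of real analytic area-preserving maps with the Poincar\'e--Hopf index formula on the cylinder). The corollary simply packages these two inputs together: Theorem \ref{birk_orbits} supplies the non-emptiness of the set of $(p,q)$-periodic orbits which is the standing hypothesis of Proposition \ref{unstable}, while the real analyticity assumption on $S$ activates the isolation property for stable fixed points of $\sigma^{-p}\circ \tilde{S}^q$. The only point worth being explicit about is that the instability conclusion genuinely relies on real analyticity, and this is the only place where that regularity enters.
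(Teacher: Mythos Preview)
Your proposal is correct and is exactly the approach the paper intends: the corollary is stated without proof because it is immediate from combining Theorem \ref{birk_orbits} (existence of a $(p,q)$-periodic orbit) with Proposition \ref{unstable} (instability in the real analytic case). Your only addition is making explicit that real analyticity is what activates the key step in Proposition \ref{unstable}; the paper leaves this implicit in the surrounding text rather than in the hypothesis of the proposition itself.
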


In the analytic case, the twist condition gives information on the structure of the set of $(p,q)$-periodic orbits. Actually the following result has been proved in \cite{maroTMNA,ortega_pb}

\begin{theorem}\label{maro_teo}
  Consider a $C^1$-embedding $\tilde{S}:\Sigma\rightarrow\RR$ satisfying property \eqref{def_cyl} and suppose it is exact symplectic and twist. Fix a positive integer $p$ and assume that for every $x\in\RR$ there exists $y\in (a,b)$ such that
  \begin{equation}\label{maro_cond}
\bar{x}(x,a)<x+p<\bar{x}(x,y).
  \end{equation}
Then the map $\sigma^{-p}\circ\tilde{S}$ has at least two fixed points in $[0,1)\times (a,b)$. Moreover, if $\tilde{S}$ is real analytic then the set of fixed points is finite or the graph of a real analytic $1$-periodic function. In the first case the index of such fixed points is either $-1$, $0$, $1$ and at least one is unstable. In the second case, all the fixed points are unstable.
  \end{theorem}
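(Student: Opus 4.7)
The plan is to turn the fixed point equation $\sigma^{-p}\circ\tilde{S}(x,y)=(x,y)$ into a critical point problem for a single real-valued function of one variable, and to read off each item in the statement from the corresponding property of that function.

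Concretely, by exact symplecticity $\tilde{S}$ admits a generating function $h(x,\bar{x})$ with $h_{12}<0$; since $y\mapsto\bar{x}(x,y)$ is strictly increasing by the twist condition, hypothesis \eqref{maro_cond} combined with continuity shows that $(x,x+p)\in\Omega$ for every $x\in\RR$. I would then introduce
\[
\phi(x):=h(x,x+p),
\]
which is $1$-periodic by the equivariance $h(x+1,\bar{x}+1)=h(x,\bar{x})$, and whose derivative $\phi'(x)=h_1(x,x+p)+h_2(x,x+p)$ vanishes precisely when $(x,-h_1(x,x+p))$ is a fixed point of $\sigma^{-p}\circ\tilde{S}$. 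As a continuous $1$-periodic function $\phi$ attains its maximum and minimum, and these provide two distinct critical points unless $\phi$ is constant; in the latter case every point of $[0,1)$ is a critical point, so in either case one obtains at least two fixed points in $[0,1)\times(a,b)$.

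Assuming now $\tilde{S}$ real analytic, $\phi$ is real analytic too, and the identity principle forces $\phi'$ to be either identically zero or to have only isolated, hence finitely many per period, zeros. In the identically zero case the set of fixed points is $\{(x,h_2(x,x+p)):x\in\RR\}$, the graph of a real analytic $1$-periodic function, which is the invariant curve alternative. In the isolated case I would expand $\phi'(x)=c(x-x_0)^m+O((x-x_0)^{m+1})$ at each critical point $x_0$ with $c\neq 0$ and $m\geq 1$, and compute the fixed point index of $T:=\sigma^{-p}\circ\tilde{S}$ at $(x_0,y_0)$ via the generating function: parametrising the curve $\{y=-h_1(x,x+p)\}$, on which the first component of $T$ already coincides with $x$, the twist reduces the two-dimensional local degree of $I-T$ at $(x_0,y_0)$ to the one-dimensional local degree of $\phi'$ at $x_0$. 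The outcome is $i(T,(x_0,y_0))=-1$ when $m$ is odd and $c>0$ (local minimum of $\phi$), $+1$ when $m$ is odd and $c<0$ (local maximum) and $0$ when $m$ is even (degenerate inflection), so all indices lie in $\{-1,0,1\}$.

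The instability statements then follow as in Proposition \ref{unstable}. A continuous $1$-periodic function attains its global minimum, so in the isolated case at least one critical point is a local minimum and the corresponding fixed point has index $-1$; by the Ortega results already invoked in Proposition \ref{unstable}, any stable isolated fixed point of a real analytic area and orientation preserving embedding must have index $+1$, so this fixed point is unstable. In the invariant curve case no fixed point is isolated, and the same Ortega criterion (stability implies isolation) forces every fixed point to be unstable. The main obstacle I anticipate is the precise degree computation at a degenerate critical point ($m\geq 2$), where one must exploit the exact symplectic structure carefully to see that the two-dimensional degree of $I-T$ is controlled exactly by the order and sign of the local expansion of $\phi'$.
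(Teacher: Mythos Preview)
The paper does not give its own proof of this theorem: immediately before the statement it says ``the following result has been proved in \cite{maroTMNA,ortega_pb}'' and simply quotes it. So there is nothing in the present paper to compare your argument against.

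That said, your plan is correct and is essentially the argument carried out in those references. The reduction via the generating function to the $1$-periodic function $\phi(x)=h(x,x+p)$, the identification of fixed points of $\sigma^{-p}\circ\tilde S$ with zeros of $\phi'$, the analytic dichotomy (finitely many zeros versus $\phi'\equiv 0$, giving the graph $y=-h_1(x,x+p)$), and the appeal to Ortega's results on stable fixed points of real analytic area-preserving embeddings are exactly the ingredients used there. For the index step you flag as delicate, the clean way to carry it out is the orientation-preserving change of variables $(x,y)\mapsto(u,v)=(x,\bar x(x,y)-x-p)$, available by twist; in these coordinates $I-T$ becomes $(u,v)\mapsto(-v,\,-\phi'(u)+vR(u,v))$, and the linear homotopy killing the $vR$ term is admissible because any zero must already have $v=0$. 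The local degree then equals that of $(u,v)\mapsto(-v,-\phi'(u))$, which is $-1$, $+1$, or $0$ according to whether $\phi'(u)=c(u-x_0)^m+\cdots$ has $m$ odd with $c>0$, $m$ odd with $c<0$, or $m$ even, matching your claim. Since the global minimum of $\phi$ on $[0,1)$ is always a critical point with odd $m$ and $c>0$, you obtain an index $-1$ fixed point, hence an unstable one by the Ortega criterion; in the non-isolated (graph) case the same criterion gives instability of all fixed points.
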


\begin{remark}
	Aubry-Mather theory gives a description, for fixed $p,q$ of those $(p,q)$-periodic orbits that are global minimizers. They can be ordered and if two of them are neighbouring, in the sense that there is no other minimal $(p,q)$-periodic orbit in the middle, then there are heteroclinic connections between them (\cite{bangert,katok_hass}). In this case, the $(p,q)$-periodic orbits are unstable.  On the other hand, they can form an invariant curve. We stress that in the analytic case Theorem \ref{maro_teo} gives the description of the set of all $(p,q)$-periodic orbits, not only those that are action minimizing.           
\end{remark}

\section{The bouncing ball map and its properties}\label{sec:tennis}

Consider the motion of a bouncing ball on a vertically
moving racket. We assume that the impacts do not affect the racket  whose vertical position is described by a
$1$-periodic $C^k$, $k\geq 2$, or real analytic function $f:\RR\rightarrow\RR$. Let us start getting the equations of motion, following \cite{maro5}. In an inertial frame, denote by $(\to,w)$ the time of impact and the corresponding velocity just after the bounce, and by
$(\Pto,\bar w)$ the corresponding values at the subsequent bounce.
From
the free falling condition we have
\begin{equation}\label{timeeq}
f(t) + w(\Pto-\to) - \frac{g}{2}(\Pto -\to)^2 = f(\Pto) \,,
\end{equation}
where $g$ stands for the standard acceleration due to gravity.
Noting that the velocity just before the impact at time $\Pto$ is $w-g(\Pto-\to)$, using
the elastic impact condition and recalling that the racket is not affected by the ball, we obtain
\begin{equation}
  \label{veleq}
\bar{w}+w-g(\Pto-\to) = 2\dot{f}(\Pto)\,,
\end{equation}
where $\dot{}$ stands for the derivative with respect to time. From conditions (\ref{timeeq}-\ref{veleq}) we can define a bouncing motion given an initial condition $(t,w)$ in the following way. If $w\leq\df(t)$ then we set $\bar{t}=t$ and $\bar{w}=w$. If $w>\df(t)$, we claim that we can choose $\bar{t}$ to be the smallest solution $\bar{t}> t$ of \eqref{timeeq}. Bolzano theorem gives the existence of a solution of \eqref{timeeq} considering
\[
F_t(\bar{t})=f(t)-f(\Pto) + w(\Pto-\to) - \frac{g}{2}(\Pto -\to)^2 
\]
and noting that $F_t(\bar{t})<0$ for $\Pto-\to$ large and $F_t(\bar{t})>0$ for $\Pto-\to\rightarrow 0^+$. Moreover, the infimum $\bar{t}$ of all these solutions is strictly larger than $t$ since if there exists a sequence $\bar{t}_n\rightarrow t$ satisfying \eqref{timeeq} then,
\[
 w - \frac{g}{2}(\Pto_n -\to) = (f(\Pto_n)- f(t)) /(\Pto_n-\to)
\]
that contradicts $w>\df(t)$ using the mean value theorem.
For this value of $\bar{t}$, condition \eqref{veleq} gives the updated velocity $\bar{w}$.

For $\Pto-\to>0$, we introduce the notation
\[
f[\to,\Pto]=\frac{f(\Pto)-f(\to)}{\Pto-\to},
\]
and write
\begin{equation}
  \label{w1}
\Pto = \to + \frac 2g w -\frac 2g f[\to,\Pto]\,,
\end{equation}
that also gives
\begin{equation}
  \label{w2}
\bar{w}= w -2f
[\to,\Pto]
+ 2\dot{f}(\Pto).
\end{equation}
Now we change to the moving frame attached to the racket, where the velocity after the impact is expressed as $v=w-\dot{f}(t)$,
and we get
the equations
\begin{equation}\label{eq:unb}
  \left\{
  \begin{split}
\Pto = {} & \to + \frac 2g \vo-\frac 2g f[\to,\Pto]+\frac 2g \df(\to)\textcolor{blue}{\,}
\\
\Pvo = {} & \vo - 2f[\to,\Pto] + \df (\Pto)+\df(\to)\textcolor{blue}{\,.}
\end{split}
\right.
\end{equation}
 By the periodicity of the
function $f$, the coordinate $t$ can be seen as an angle. Hence, equations \eqref{eq:unb} define formally a map
\[
\begin{array}{rcl}
\tilde\Psi: 
\RR & \longrightarrow & \RR \\
(\to,\vo) & \longmapsto & (\Pto, \Pvo),
\end{array}
\]
satisfying $\tilde\Psi\circ\sigma=\sigma\circ\tilde\Psi$ and the associated map of the cylinder $\Psi:\AA\rightarrow\AA$.

This is the formulation considered by Kunze and Ortega
\cite{kunzeortega2}. Another approach was considered by Pustylnikov in
\cite{pust} and leads to a map that is equivalent to
\eqref{eq:unb}, see \cite{maro3}.
Noting that $w>\df(t)$ if and only if $v>0$, we can define a bouncing motion as before and denote it as a sequence $(t_n,v_n)_{n\in\ZZ^+}$ with $\ZZ^+=\{n\in\ZZ \::\: n\geq 0\}$ such that $(t_n,v_n)\in \TT\times [0,+\infty)$ for every $n\in\ZZ^+$.

The maps $\Psi$ and its lift $\tilde{\Psi}$ are only defined formally. In the following lemma we state that they are well defined and have some regularity.  
Let us introduce the notation $\RR_{v_*}=\{v\in\RR \: :\: v>v_* \}$,  $\AA_{v_*} = \TT\times \RR_{v_*}$ and $\RR^2_{v_*}=\RR\times\RR_{v_*}$. We will denote the $\sup$ norm by $\norm{\cdot}$ and recall that $f\in C^k(\TT), k\geq 2$ or real analytic.

\begin{lemma}
  \label{well_def}
There exists $v_*>4\norm{\df}$ such that the map $\Psi:\AA_{v_*}\rightarrow \AA$ is a $C^{k-1}$ embedding. If $f$ is real analytic, then $\Psi$ is a real analytic embedding. 
\end{lemma}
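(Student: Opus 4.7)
The plan is to apply the implicit function theorem to the first equation of \eqref{eq:unb}, viewed as implicitly defining $\Pto$ as a function of $(\to,\vo)$. Setting $\tau=\Pto-\to$ and clearing the denominator in $f[\to,\Pto]$, the equation becomes
$$H(\to,\tau,\vo):=\frac{g}{2}\tau^{2}-(\vo+\df(\to))\tau+f(\to+\tau)-f(\to)=0,$$
so the apparent singularity of $f[\to,\Pto]$ at $\tau=0$ disappears and $H$ is $C^{k-1}$ (resp. real analytic) jointly in $(\to,\tau,\vo)$. Differentiating, $\partial_\tau H=g\tau-\vo-\df(\to)+\df(\to+\tau)$; at any zero of $H$ one substitutes $\frac{g}{2}\tau=\vo+\df(\to)-f[\to,\to+\tau]$ to obtain
$$\partial_\tau H\big|_{H=0}=\vo-2f[\to,\to+\tau]+\df(\to)+\df(\to+\tau),$$
which is exactly the second component $\Pvo$ defined in \eqref{eq:unb}. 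Since the mean value theorem gives $|f[\to,\to+\tau]|\le\norm{\df}$, one has $|\Pvo-\vo|\le 4\norm{\df}$, so any choice $v_*>4\norm{\df}$ forces $\partial_\tau H\ge\vo-4\norm{\df}>0$ at every zero, provided $\vo>v_*$.

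Strict positivity of $\partial_\tau H$ at every zero forces uniqueness on $\{\tau>0\}$: if $\tau_1<\tau_2$ were two positive zeros, $H$ would rise above $0$ just after $\tau_1$ and return to $0$ at $\tau_2$, giving $\partial_\tau H(\tau_2)\le 0$ and contradicting the computation. Since in addition $H(\to,0,\vo)=0$ with $\partial_\tau H(\to,0,\vo)=-\vo<0$, this unique positive zero coincides with the smallest $\Pto>\to$ solving \eqref{timeeq}, i.e.\ the genuine bouncing time defined after that equation. The $C^{k-1}$ (resp.\ analytic) implicit function theorem then yields $\Pto=\to+\tau(\to,\vo)$ of the corresponding regularity on $\AA_{v_*}$; plugging into the explicit formula for $\Pvo$ gives a map of the same regularity. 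The intertwining $\tilde\Psi\circ\sigma=\sigma\circ\tilde\Psi$ allows one to drop to the cylinder.

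It remains to upgrade "smooth map" to "embedding". Non-vanishing of the Jacobian is equivalent to exact symplecticness: from \eqref{eq:unb} one checks that $\vo=\tfrac{g}{2}(\Pto-\to)+f[\to,\Pto]-\df(\to)$ and $\Pvo=\tfrac{g}{2}(\Pto-\to)-f[\to,\Pto]+\df(\Pto)$ are produced by a generating function $h(\to,\Pto)$ whose mixed partial satisfies $h_{12}=-\frac{g}{2}+O(\tau^{-1})$, strictly negative after possibly enlarging $v_*$ so that $\tau\ge 4\norm{\df}/g$. The same monotonicity yields injectivity: for fixed $\Pto$, the map $\to\mapsto h_2(\to,\Pto)=\Pvo$ is strictly decreasing on $\RR$, so two distinct preimages of a given $(\Pto,\Pvo)$ in $\RR^2_{v_*}$ cannot coexist, and on the cylinder any such pair would differ by a nonzero integer in $\to$, impossible after reducing to the fundamental domain $[0,1)$.

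The main obstacle is the identification $\partial_\tau H|_{H=0}=\Pvo$, which is the clean mechanism by which the hypothesis $v_*>4\norm{\df}$ translates into the non-degeneracy required by the implicit function theorem and simultaneously into positivity of the velocity after bounce. Once this identity is in hand, uniqueness of the zero, regularity, exact symplecticness, and injectivity all follow from standard monotonicity considerations.
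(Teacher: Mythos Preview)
Your implicit–function–theorem step is essentially the same as the paper's, only packaged differently: the paper applies the IFT to the two–component map
\[
F(\to,\vo,\Pto,\Pvo)=\bigl(\Pto-\to-\tfrac{2}{g}\vo+\tfrac{2}{g}f[\to,\Pto]-\tfrac{2}{g}\df(\to),\ \Pvo-\vo+2f[\to,\Pto]-\df(\Pto)-\df(\to)\bigr)
\]
on $\{\to\neq\Pto\}$, whereas you clear the denominator and reduce to a scalar equation $H(\to,\tau,\vo)=0$. Your route has two genuine advantages: the domain singularity at $\tau=0$ disappears, and the identity $\partial_\tau H|_{H=0}=\Pvo$ makes transparent why the threshold $v_*>4\norm{\df}$ simultaneously yields nondegeneracy of the implicit equation and positivity of the outgoing velocity. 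Your uniqueness argument for the positive zero is also more explicit than anything in the paper.

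There is, however, an error in the embedding part. In the variables $(\to,\vo)$ the map $\tilde\Psi$ is \emph{not} exact symplectic and admits no generating function: one computes $\partial_{\Pto}(-\vo)=-\tfrac{g}{2}-\partial_{\Pto}f[\to,\Pto]$ and $\partial_{\to}\Pvo=-\tfrac{g}{2}-\partial_{\to}f[\to,\Pto]$, and these differ unless $f$ is at most quadratic. (Equivalently, $\tilde\Phi$ is area–preserving in $(\to,\Eo)$, so $\det D\tilde\Psi=\Pvo/\vo\neq 1$.) So the sentence ``non-vanishing of the Jacobian is equivalent to exact symplecticness'' and the appeal to a generating function $h(\to,\Pto)$ are both incorrect as written. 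The good news is that your actual mechanism survives: $\det D\tilde\Psi=\Pvo/\vo>0$ follows directly from $\Pvo>\vo-4\norm{\df}>0$, and for injectivity the monotonicity you need, $\partial_{\to}\Pvo=-\tfrac{g}{2}-\partial_{\to}f[\to,\Pto]<0$ once $\Pto-\to>4\norm{\df}/g$, can be checked by hand without invoking any $h$. Either make this direct computation, or pass to $(\to,\Eo)$ where the generating function genuinely exists. The paper itself does not give details here and simply refers to \cite{maro5} for injectivity.
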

%}
\begin{proof}
The proof is essentially given in \cite{maro5}. We give here a sketch.
To prove that the map is well defined and $C^{k-1}$ we denote $v_{**}=4\norm{\df}$ and apply the implicit function theorem to the $C^{k-1}$ function $F :\{(\to,\vo,\Pto,\Pvo)\in \AA_{v_{**}} \times \RR^2 \: :\:\to\neq\Pto   \} \rightarrow \RR^2$
given by

\begin{equation*}
%\textcolor{red}{
% \RR_{v_*} \times (\TT\times\RR) \ni}
 F(\to,\vo,\Pto,\Pvo):=
\left(
\begin{split}
& \Pto - \to - \frac 2g \vo + \frac 2g f[\to,\Pto]-\frac 2g \df (\to) \\
& \Pvo - \vo + 2f[\to,\Pto] - \df(\Pto)-\df(\to)
\end{split}
\right),
\end{equation*}
This gives the existence of a $C^{k-1}$ map $\Psi$ defined in $\AA_{v_{**}}$ such that $F(\to,\vo,\Psi(\to,\vo) )=0$. If $f$ is real analytic, we get that $\Psi$ is real analytic applying the analytic version of the implicit function theorem. \\
One can easily check that $\Psi$ is a local diffeomorphism since
\[
\det \Dif_{\to,\vo} \Psi (t,v) =-\frac{\det (\Dif_{\to,\vo} F(\to,\vo,\Psi(\to,\vo) ))}{ \det (\Dif_{\Pto,\Pvo} F(\to,\vo,\Psi(\to,\vo) ))} \neq 0 \quad\mbox{on }\AA_{v_{**}}.
\]
To prove that $\Psi$ is a global embedding we need to prove that it is injective in $\AA_{v_*}$ for $v_*$ eventually larger than $v_{**}$. This can be done as in \cite{maro5}. 
\end{proof}

\begin{remark}
  Note that we cannot guarantee that if $(\to_0,\vo_0) \in \AA_{v_*}$ then $\Psi(\to_0,\vo_0) \in \AA_{v_*}$. This is reasonable, since the ball can slow down decreasing its velocity at every bounce.
  %$Hence, in general, given $(\to_0,\vo_0) \in \AA_{v_*}$ we can only define a finite number $n_{max}$ of iterates of the map by the relation: 
 % \[
 % \Psi^{n_{max}} (\to_0,\vo_0) \in \AA_{v_*}, \quad\mbox{ but }\quad   \Psi^{n_{max}+1} (\to_0,\vo_0) \notin \AA_{v_*}
 % \]
  %and the same for backwards iterates.
  However, a bouncing motion is defined for $v\geq 0$.% defined in this section we will consider only orbits contained in $\AA_{v_*}$.
\end{remark}

\begin{remark}
  From the physical point of view, the condition $\Psi^n(\to_0,\vo_0)\in\AA_{v_*}$ for every $n$, implies that we can only hit the ball when it is falling. To prove it, suppose that $\to_0 =0$ and let us see what happens at the first iterate. The time at which the ball reaches its maximum height is $t^{max}=\frac{\vo_0}{g}$. On the other hand, the first impact time $\Pto$ satisfies,
  \[
\Pto \geq \frac{2}{g}\vo_0 - \frac{4}{g}\norm{\df}= t^{max}\left(         2-\frac{4}{\vo_0}\norm{\df} \right) > t^{max},
\]
where the last inequality comes from $\vo_0\in\RR_{v_*}$ and $v_*>4\norm{\df}$.
\end{remark}

The map $\tilde\Psi$ is exact symplectic if we pass to the variables time-energy $(\to,\Eo)$ defined by
\[
 (\to,\Eo) = \left(\to,\frac{1}{2}\vo^2\right),
\]
obtaining the conjugated map
\[
\Phi :\AA_{\Eo_*}
\longrightarrow  \AA, \qquad \Eo_*=\frac{1}{2}v_*^2
%\qquad \mapPP(\to,\Eo) = \Psi \circ
%\mapP \circ \Psi^{-1} (\to,\Eo)
\]
defined by
\begin{equation}\label{eq:unbe}
  \left\{
  \begin{split}
\Pto =  & \to + \frac 2g \sqrt{2\Eo}-\frac 2g f[\to,\Pto]+\frac 2g \df(\to)
\\
\PEo =  & \frac{1}{2}\left( \sqrt{2\Eo} - 2f[\to,\Pto] + \df (\Pto)+\df(\to) \right)^2,
\end{split}
\right.
\end{equation}
that by Lemma \ref{well_def} is a $C^{k-1}$-embedding or real analytic if $f$ is real analytic. More precisely, we have the following
\begin{lemma}
  \label{lemma:exact}
  The map $\Phi$ is exact symplectic and twist in $\AA_{e_*}$.
 % The generating function is given by
 % \begin{equation}
 %   \label{genfun}
 %   \begin{split}
 %     h(\to,\Pto)= &  \frac{g^2}{24}(\Pto-\to)^3+\frac{g}{2}(f(\Pto)+f(\to))(\Pto-\to)-\frac{(f(\Pto)-f(\to))^2}{2(\Pto-\to)}\\
 %     & -g\int^{\Pto}_{\to}f(s)ds+\frac{1}{2}\int^{\Pto}_{\to}
 %     (\dot{f}(s))^2ds,
 %   \end{split}
 % \end{equation}
 % where
 % \[
 % \Pto-\to\in\Omega =\{ (\to,\Pto)\in\RR^2 : \Pto>T(\to)\}
 % \]
 % for a $C^{k-1}$ function $T:\RR\rightarrow \RR$ such that $T(t+1)=T(t)+1$ and $\frac{2}{g}(v_*-2\norm{\df})<T(t)-t<\frac{2}{g}(v_*+2\norm{\df})$.\\
  Moreover, $\Phi$ preserves and twists infinitely the upper end.
  
\end{lemma}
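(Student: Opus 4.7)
Introduce the velocity variables $A := \sqrt{2\Eo}$ and $B := \sqrt{2\PEo}$; solving \eqref{eq:unbe} for $A,B$ in terms of $(\to,\Pto)$ with $\tau := \Pto-\to$ and $\Delta f := f(\Pto)-f(\to)$ gives
\[
A = \frac{g\tau}{2} + \frac{\Delta f}{\tau} - \df(\to), \qquad B = \frac{g\tau}{2} - \frac{\Delta f}{\tau} + \df(\Pto),
\]
i.e.\ the vertical velocities in the racket frame just after consecutive bounces. A direct differentiation produces the two clean identities
\[
\pd_\Pto A = \frac{B}{\tau}, \qquad \pd_\to B = -\frac{A}{\tau},
\]
which are the heart of the argument.

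From these identities, $\pd_\Pto \Eo + \pd_\to \PEo = A\pd_\Pto A + B\pd_\to B = 0$, so the $1$-form $\lambda := \PEo\,d\Pto - \Eo\,d\to$ is closed on the simply connected domain $\{(\to,\Pto):\Pto > \to\}$ and equals $dh$ for some $h$ of the regularity of $\Phi$. Invariance of $\lambda$ under $(\to,\Pto)\mapsto(\to+1,\Pto+1)$ forces $c := h(\to+1,\Pto+1) - h(\to,\Pto)$ to be constant, and I evaluate $c$ by integrating $\lambda$ along the diagonal path $t\mapsto(\to_0+t,\Pto_0+t)$, $t\in[0,1]$: along it $\tau$ is fixed and $\lambda = \tfrac{1}{2}(B^2-A^2)\,dt$. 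Expanding $B^2-A^2$ produces four kinds of terms, each integrating to zero over $[0,1]$: three of them are of the form $\int_0^1 \df(\to_0+t)\,dt$, $\int_0^1 \Delta f(t)\,dt$, or $\int_0^1 \bigl(\df(\Pto_0+t)^2-\df(\to_0+t)^2\bigr)dt$ vanishing by $1$-periodicity of $f$ and $\df$, while the fourth has integrand $\tfrac{d}{dt}\bigl(\tfrac{1}{2}(\Delta f)^2\bigr)$, a total derivative of a $1$-periodic function. Hence $c=0$, and $V(\to,\Eo) := h(\to,\Pto(\to,\Eo))$ is $1$-periodic in $\to$ and satisfies $dV = \PEo\,d\Pto - \Eo\,d\to$, giving exact symplecticity.

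The twist condition is immediate from the key identities: $h_{12} = \pd_\to \PEo = -AB/\tau$, which is strictly negative on $\AA_{\Eo_*}$ because the hypothesis $\vo > v_* > 4\norm{\df}$ combined with the mean value estimate $|\Delta f/\tau|\leq\norm{\df}$ yields $g\tau/2 \geq \vo - 2\norm{\df} > 0$ and $B \geq \vo - 4\norm{\df} > 0$. These same estimates deliver the upper-end properties: $\Pto - \to = \tau \geq 2(\vo-2\norm{\df})/g \to +\infty$ gives infinite twist, and $\PEo = B^2/2 \geq (\vo - 4\norm{\df})^2/2 \to +\infty$ gives preservation, both uniformly in $\to$ as $\Eo\to+\infty$.

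The only non-routine step is the vanishing of the period $c$: it asserts that $\lambda$ descends to an \emph{exact} (not merely closed) $1$-form on the cylinder. Once organised through $\pd_\Pto A = B/\tau$ and $\pd_\to B = -A/\tau$, every other piece of the lemma reduces to elementary bounds enforced by $v_* > 4\norm{\df}$.
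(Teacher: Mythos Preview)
The paper states this lemma without proof (the properties are taken as known from the earlier references \cite{maro3,maro5,kunzeortega2}), so there is no argument in the paper to compare against. Your proof is correct and self-contained: the identities $\pd_{\Pto}A=B/\tau$ and $\pd_{\to}B=-A/\tau$ do hold by direct computation, they give closedness of $\PEo\,d\Pto-\Eo\,d\to$ on the simply connected half-plane $\{\Pto>\to\}$, and your period computation showing $c=0$ is accurate --- the four terms in $B^2-A^2$ integrate to zero exactly as you describe. The generating-function relation $h_1=-\Eo$, $h_2=\PEo$ with $h_{12}=-AB/\tau<0$ then yields both exactness and twist, and the end behaviour follows from the elementary bounds $g\tau/2\geq v-2\norm{\df}$ and $B\geq v-4\norm{\df}$.

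One small remark on presentation: you establish the twist condition ($h_{12}<0$) only after constructing $h$, but to pass from $V=h(\to,\Pto(\to,\Eo))$ back to a function on $\AA_{\Eo_*}$ you implicitly use that $(\to,\Eo)\mapsto(\to,\Pto)$ is a diffeomorphism, which itself relies on $\pd_e\Pto\neq0$. This is not circular, because your formula $\pd_{\Pto}A=B/\tau>0$ on the relevant domain already gives $\pd_e\Pto=\tau/(AB)>0$ directly; it would just be cleaner to state the twist first and then build $V$. Apart from this ordering issue the argument is complete.
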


The map $\Phi$ is not defined in the whole cylinder. However, it is possible to extend to the whole cylinder preserving its properties. More precisely:
\begin{lemma}
  \label{extension}
There exists a $C^{k-2}$ exact symplectic and twist diffeomorphism $\bar{\Phi}:\AA\rightarrow\AA$ such that $\bar{\Phi} \equiv \Phi$ on $\AA_{e_*}$ and $\bar{\Phi}\equiv\Phi_0$ on $\AA\setminus\AA_{\frac{e_*}{2}}$ where $\Phi_0$ is the integrable twist map $\Phi_0(t,e)=(t+e,e)$. Moreover, $\bar{\Phi}$ preserves the ends of the cylinder and twists them infinitely. If $f$ is real analytic, then the extension $\bar{\Phi}$ is $C^\infty$.
\end{lemma}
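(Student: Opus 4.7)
The plan is to construct $\bar\Phi$ by smoothly interpolating the generating function of $\Phi$ with that of $\Phi_0$ in the transition strip $\AA_{e_*/2}\setminus \AA_{e_*}$.

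First, by Lemma~\ref{lemma:exact}, the map $\Phi$ on $\AA_{e_*}$ admits a generating function $h(t,\bar{t})$ (of class $C^{k-1}$ or real analytic), defined on a region $D_\Phi = \{(t,\bar{t}) : \bar{t} \geq T(t,e_*)\}$ of $\RR^2$, satisfying $h(t+1,\bar{t}+1) = h(t,\bar{t})$ and the twist $h_{12} < 0$. The target integrable map $\Phi_0(t,e) = (t+e,e)$ is generated by $h_0(t,\bar t) = \frac{1}{2}(\bar t - t)^2$ on all of $\RR^2$, with $h_{0,12} = -1$. I would pick a smooth cutoff $\chi : \RR \to [0,1]$ supported so that the interpolated function
\[
\bar h(t,\bar t) = \chi(\bar t - t)\, h(t,\bar t) + (1-\chi(\bar t-t))\, h_0(t,\bar t)
\]
satisfies $\bar h = h$ wherever the corresponding $e = -\bar h_1(t,\bar t)$ is at least $e_*$ and $\bar h = h_0$ wherever $e \leq e_*/2$, extending $h$ smoothly across $\partial D_\Phi$ if needed. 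One then checks that $\bar h$ is periodic, $\bar h(t+1,\bar t+1) = \bar h(t,\bar t)$, and satisfies $\bar h_{12} < 0$ on all of $\RR^2$; this last condition is automatic where $\bar h$ agrees with $h$ or $h_0$, and in the transition strip requires controlling the contributions of $\chi'$ and $\chi''$ against $h - h_0$ and its derivatives.

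Then, $\bar\Phi$ is recovered from $\bar h$ in the standard way: the equation $\bar h_1(t,\bar t) = -e$ has a unique solution $\bar t = \bar T(t,e)$ by the twist condition, and $\bar e = \bar h_2(t, \bar T(t,e))$. By the implicit function theorem one obtains a $C^{k-2}$ exact symplectic twist diffeomorphism $\bar\Phi : \AA \to \AA$. By construction $\bar\Phi \equiv \Phi$ on $\AA_{e_*}$ and $\bar\Phi \equiv \Phi_0$ on $\AA\setminus\AA_{e_*/2}$. The upper end of the cylinder is preserved and twisted infinitely by inheritance from $\Phi$ (Lemma~\ref{lemma:exact}); the lower end is preserved and twisted infinitely because $\bar\Phi = \Phi_0$ there and $\bar t - t = e \to -\infty$ as $e \to -\infty$. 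In the real analytic case, $h$ is real analytic but the smooth cutoff $\chi$ is only $C^\infty$, so $\bar\Phi$ is $C^\infty$ but in general not analytic.

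The main obstacle is verifying $\bar h_{12} < 0$ across the transition strip. The asymptotic behaviour of $h$ is cubic in $\bar t - t$ for large $\bar t - t$ (since $e \sim g^2(\bar t - t)^2/8$ in the free-fall regime), while $h_0$ is only quadratic, so $h - h_0$ and its derivatives are large in the strip and the cutoff-derivative contributions to $\bar h_{12}$ are not obviously subordinate to the leading negative terms. Handling this may require tuning the position and width of $\chi$ carefully with respect to $e_*$, or performing the extension in two stages --- first gluing $\Phi$ onto the free-fall integrable map (which has matching cubic generating function) and then symplectically morphing the result into $\Phi_0$.
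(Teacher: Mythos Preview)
The paper states this lemma without proof, so there is no argument in the text to compare yours against.

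Your plan is the standard one and is sound in outline: extending at the level of generating functions guarantees exactness automatically, and the twist survives precisely when $\bar h_{12}<0$, which is what you isolate as the crux. The regularity count is also right: $h$ is $C^{k-1}$, the cutoff is $C^\infty$, and one derivative is lost solving $\bar h_1(t,\bar t)=-e$ implicitly, giving $C^{k-2}$ for $\bar\Phi$ (and $C^\infty$ in the analytic case, since analyticity is destroyed by $\chi$ but smoothness is not).

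The obstacle you flag is genuine: since $h(t,\bar t)\sim\tfrac{g^2}{24}(\bar t-t)^3$ while $h_0=\tfrac12(\bar t-t)^2$, a direct convex combination produces cross terms $\chi''(h-h_0)$ in $\bar h_{12}$ of the same order as the negative principal part, and no tuning of the cutoff width repairs this uniformly. Your two-stage remedy is the correct fix and can be carried out cleanly. First interpolate $h$ with the free-fall generating function $h_{\mathrm{free}}(t,\bar t)=\tfrac{g^2}{24}(\bar t-t)^3$; the difference $h-h_{\mathrm{free}}$ and its derivatives are $O(1)$ in $\bar t-t$ (the $f$-dependent terms are lower order), so for $\bar t-t$ large the gluing keeps $\bar h_{12}<0$. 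Below that you are left with a function of $\tau=\bar t-t$ alone, and interpolating $\tfrac{g^2}{24}\tau^3$ with $\tfrac12\tau^2$ while keeping the second derivative positive is a one-variable exercise. One bookkeeping point: the correspondence between $e$ and $\bar t-t$ differs for $\Phi$ and $\Phi_0$, so after building $\bar h$ you must verify that the region where $\bar h=h$ actually pulls back to $\{e\ge e_*\}$ and the region where $\bar h=h_0$ to $\{e\le e_*/2\}$; this may require enlarging $e_*$, which is harmless since the original $e_*$ only had to satisfy a lower bound.
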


Due to Lemma \ref{well_def} and the fact that the maps $\Phi$ and $\Psi$ are conjugated we can consider the lift $\tilde{\Phi}:\RR^2_{e_*}\rightarrow\RR^2$ and give the following
\begin{definition}
  A complete bouncing motion $(t_n,e_n)_{n\in\ZZ}$ is a complete orbit of the map $\tilde{\Phi}$.
  \end{definition}

In the following section we will study the existence and properties of periodic complete bouncing motions as orbits of the map $\tilde{\Phi}$ defined in \eqref{eq:unbe}. 

%\begin{remark}\label{rem:isotop}
%From the topological point of view, the map $\Phi$ and $\bar{\Phi}$ are area and orientation preserving, being symplectic. Moreover, the map $\bar{\Phi}$ is isotopic to the identity, in the sense that there exists a function. This comes from the fact that there are only two classes of isotopy for orientation preserving diffeomophisms of the cylinder, corresponding to the identity $(t,e)\mapsto (t,e)$ and minus the identity $(t,e)\mapsto (-t,-e)$ (see CITE[]). Since $\bar{\Phi}$ preserves the ends of the cylinder it must be isotopic to the identity. Hence, the map $\Phi:\AA_{e_*}\rightarrow\AA$ is isotopic to the inclusion, being the isotopy the restriction of the previous isotopy to $\AA_{e_*}$.  
%  \end{remark}

\section{Periodic bouncing motions}\label{sec:per}
The existence of periodic complete bouncing motions follows from an application of Aubry-Mather theory. In this section we prove it and in the analytic case we give some results on the structure of such motions and their stability.

We start saying that a complete bouncing motion is $(q,p)$-periodic if in time $p$ the ball makes $q$ bouncing before repeating the motion, more precisely:
\begin{definition}
  Given two coprime integers $q,p\in\ZZ^+$, a complete bouncing motion $(t_n,e_n)_{n\in\ZZ}$ is $(q,p)$-periodic if the corresponding orbit of $\tilde{\Phi}$ is $(p,q)$-periodic. Moreover, we say that it is stable if the corresponding orbit is stable.
\end{definition}

The existence of two $(q,p)$-periodic complete bouncing motions comes from an application of Theorem \ref{birk_orbits}.

\begin{theorem}\label{pre_bouncing}
For every $f\in C^3$ there exists $\alpha>0$ such that for every positive coprime $p,q$ satisfying $p/q>\alpha$ there exist two different $(q,p)$-periodic complete bouncing motions. Moreover, if $f$ is real analytic, then at least one of the $(q,p)$-periodic complete bouncing motion is unstable.  
  \end{theorem}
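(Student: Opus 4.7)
The plan is to reduce Theorem~\ref{pre_bouncing} to the abstract results of Section~\ref{sec:theory} via the globally defined extension $\bar\Phi$ from Lemma~\ref{extension}. By that lemma $\bar\Phi$ is an exact symplectic twist diffeomorphism of $\AA$ that preserves both ends and twists them infinitely, so its lift satisfies the hypotheses of Theorem~\ref{birk_orbits}. Hence for every coprime $p,q$ we obtain at least two distinct Birkhoff $(p,q)$-periodic orbits of $\bar\Phi$, one a global minimum and one a minimax of the associated action.

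The key step is to show that when $p/q$ exceeds a suitable threshold $\alpha=\alpha(f,g,e_*)$, each such orbit actually lives in $\AA_{e_*}$, where $\bar\Phi$ coincides with the physical map $\Phi$, so that it corresponds to a genuine $(q,p)$-periodic complete bouncing motion. Summing the first component of \eqref{eq:unbe} over one period of an orbit $(t_n,e_n)_{n=0}^{q-1}$ yields
\begin{equation*}
p \;=\; \sum_{n=0}^{q-1}(t_{n+1}-t_n) \;=\; \frac{2}{g}\sum_{n=0}^{q-1}\sqrt{2 e_n}+R,\qquad |R|\leq \frac{4q}{g}\|\df\|,
\end{equation*}
so that the mean of $\sqrt{2e_n}$ is at least $\frac{gp}{2q}-2\|\df\|$. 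To upgrade this mean bound into a uniform pointwise bound one exploits the Birkhoff structure: a Birkhoff $(p,q)$-periodic orbit is supported on a Lipschitz graph over $\TT$ whose Lipschitz constant depends only on $\bar\Phi$, so the oscillation of $e_n$ along the orbit is bounded independently of $p,q$. Combined with the mean estimate this furnishes a threshold $\alpha$ beyond which $e_n>e_*$ for every $n$, and both Birkhoff orbits persist as two distinct $(q,p)$-periodic complete bouncing motions of $\Phi$.

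For the second assertion, when $f$ is real analytic Lemma~\ref{well_def} makes $\tilde\Phi$ a real analytic exact symplectic twist embedding of $\RR^2_{e_*}$, and the previous step supplies at least one $(p,q)$-periodic orbit of $\tilde\Phi$. Proposition~\ref{unstable} then applies verbatim and yields the existence of at least one unstable $(p,q)$-periodic orbit of $\tilde\Phi$, hence of an unstable $(q,p)$-periodic complete bouncing motion.

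The main obstacle is the confinement argument in the middle paragraph. The averaging identity alone only controls the mean energy over a period, and the one-step estimate $|\sqrt{2e_{n+1}}-\sqrt{2e_n}|\leq 4\|\df\|$, obtained by writing $\sqrt{2\PEo}=\sqrt{2\Eo}-2f[\to,\Pto]+\df(\Pto)+\df(\to)$ in the high-energy regime, degrades the bound by a factor that grows with $q$. To produce a threshold $\alpha$ \emph{independent} of $q$ one must genuinely invoke the Aubry--Mather Lipschitz graph structure of Birkhoff orbits (or an ad hoc replacement for the minimax orbit), and this is where the bulk of the real work lies.
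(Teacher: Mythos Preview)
Your global architecture---pass to the extension $\bar\Phi$ of Lemma~\ref{extension}, apply Theorem~\ref{birk_orbits} to obtain two Birkhoff $(p,q)$-periodic orbits, confine them to $\AA_{e_*}$ so that they become genuine orbits of $\Phi$, and then invoke Proposition~\ref{unstable} in the analytic case---is exactly the paper's. The difference lies entirely in the confinement step, and there the paper takes a markedly simpler route that dissolves the very obstacle you single out.

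Instead of a mean-energy identity plus an oscillation bound via Lipschitz-graph theory, the paper exploits a \emph{pointwise} spacing estimate valid for any Birkhoff $(p,q)$-periodic orbit: by cyclic ordering one has $t_{n+1}-t_n>p/q-1$ for every $n$ (this is \cite[Lemma~9.1]{gole}). Coupling this with the upper bound $t_{n+1}-t_n\le \tfrac{2}{g}\sqrt{2e_n}+\tfrac{4}{g}\|\df\|$ coming from the first line of \eqref{eq:unbe} yields $e_n>e_*$ for all $n$ as soon as
\[
\frac{p}{q}-1-\frac{4}{g}\|\df\|>\frac{2}{g}\sqrt{2e_*}.
\]
Because \emph{both} orbits delivered by Theorem~\ref{birk_orbits} are Birkhoff, this argument applies uniformly to the minimax orbit as well; no Aubry--Mather Lipschitz-graph machinery, and no ``ad hoc replacement for the minimax orbit'', is required. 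Your averaging identity is correct but superfluous, and the uniform-in-$(p,q)$ Lipschitz bound you would need for the oscillation argument is genuinely delicate here (the twist of $\bar\Phi$ degenerates like $e^{-1/2}$ at infinity), whereas the paper's gap estimate is a one-line consequence of the Birkhoff ordering. A small side remark: both your averaging step and the paper's upper bound invoke \eqref{eq:unbe} for an orbit of $\bar\Phi$ before confinement is established; the honest version uses that, by the twist, $\bar t-t$ is increasing in $e$ and hence bounded on $e\le e_*$ by its value at $e_*$, so the inequality still forces $e_n>e_*$.
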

\begin{proof}
  By Lemma \ref{lemma:exact}, the map $\Phi$ defined in \eqref{eq:unbe} is a $C^2$ exact symplectic twist embedding in $\AA_{e_*}$ for some large $e_*$ depending on $\norm{\df}$. Moreover $\Phi$ preserves and twists infinitely the upper end. Its extension $\bar{\Phi}$ coming from Lemma \ref{extension} satisfies the hypothesis of Theorem \ref{birk_orbits} and admits for every coprime $p,q$ two Birkhoff $(q,p)$-periodic orbit. Consider the Birkhoff $(q,p)$-periodic orbits for $p,q$ positive and $p/q$ large enough such that
  \begin{equation}\label{choice_pq}
  \frac{p}{q}-1-\frac{4}{g}\norm\df>\frac{2}{g}\sqrt{2e_*}.
  \end{equation}
  Since Birkhoff periodic orbits are cyclically ordered, from \cite[lemma 9.1]{gole} we have that they satisfy the estimate $t_{n+1}-t_n>p/q - 1$. On the other hand, from \eqref{eq:unbe},
  \[
  t_{n+1}-t_n\leq\frac{2}{g}\sqrt{2e_n}+\frac{4}{g}\norm\df 
  \]
  so that it must be
  \[
\frac{2}{g}\sqrt{2e_n}+\frac{4}{g}\norm\df>\frac{p}{q}-1.
  \]
By the choice of $p/q$ in \eqref{choice_pq} we have that $e_n>e_*$ for every $n\in\ZZ$ so that these Birkhoff periodic orbits are all contained in $\AA_{e_*}$ and so they are orbits of the original map $\Phi$. If $f$ is real analytic the result on the instability follows from Proposition \ref{unstable}.  
\end{proof}

%In view of remark \ref{rem_periodic} we would like to apply Theorem \ref{franks_teo} to the map $\sigma^{-p}\circ\Phi^q$. Let us first prove that

Theorem \ref{pre_bouncing} gives the existence of $(p,q)$-periodic bouncing motions but does not give information on the topological structure of the set of $(p,q)$-periodic bouncing motions for fixed values of $(p,q)$. This is a complicated issue and some results comes from Aubry-Mather theory. However, here we will see which results can be obtained using Theorem \ref{maro_teo}. To state this result we give the following
\begin{definition}
  %We say that a $(p,q)$-periodic complete bouncing motion $(t_n,v_n)_{n\in\ZZ}$ is isolated if there exists $\delta>0$ such that every other complete bouncing motion $(\hat{t}_n,\hat{v}_n)_{n\in\ZZ}$ such that $|\hat{t}_0-t_0|+|\hat{v}_0-v_0|<\delta$ is not $(p,q)$-periodic. On the other hand
  We say that the set of $(p,q)$-periodic complete bouncing motions is (analytically) degenerate if there exists a real analytic curve $(t(s),e(s))$ such that $(t(s+1),e(s+1))=(t(s)+1,e(s))$ for every $s\in\RR$, the function $t(s)$ is bijective for $s\in[0,1)$ and $(t_n,e_n)_{n\in\ZZ}$ is a $(p,q)$-periodic complete bouncing motion if and only if there exist $n_0,s_0$ such that $(t_{n_0},e_{n_0})=(t(s_0),e(s_0))$.          
  \end{definition}

The following result is a quite direct consequence of Lemma \ref{lemma:exact}.
\begin{proposition}
If $f$ is real analytic, then there exists $\alpha>0$ such that for every $p>\alpha$ the set of $(p,1)$-periodic complete bouncing motions is either finite or degenerate. In the first case at least one $(p,1)$-periodic complete bouncing motion is unstable. In the degenerate case, all $(p,1)$-periodic complete bouncing motion are unstable.        
  \end{proposition}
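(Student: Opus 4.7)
The plan is to apply Theorem \ref{maro_teo} directly to the lifted bouncing map $\tilde{\Phi}$ on the strip $\Sigma=\RR\times(e_*,+\infty)$. Lemmas \ref{well_def} and \ref{lemma:exact} already give that, in the real analytic case, $\tilde{\Phi}$ is a real analytic exact symplectic twist embedding of $\Sigma$, so the only hypothesis of Theorem \ref{maro_teo} that genuinely requires work is the geometric condition \eqref{maro_cond}.

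To verify \eqref{maro_cond} I would use the first equation in \eqref{eq:unbe} together with the mean value estimate $|f[t,\bar t]|\leq\norm{\df}$ to obtain the uniform bound
\[
\tfrac{2}{g}\sqrt{2e}-\tfrac{4}{g}\norm{\df}\;\leq\;\bar t(t,e)-t\;\leq\;\tfrac{2}{g}\sqrt{2e}+\tfrac{4}{g}\norm{\df}.
\]
Setting $\alpha:=\tfrac{2}{g}\sqrt{2e_*}+\tfrac{4}{g}\norm{\df}$, for any integer $p>\alpha$ the upper estimate at $e=e_*$ yields $\bar t(t,e_*)<t+p$, while the lower estimate yields $\bar t(t,e)>t+p$ for all $e$ sufficiently large; this is exactly \eqref{maro_cond}.

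Theorem \ref{maro_teo} then tells me that the set of fixed points of $\sigma^{-p}\circ\tilde{\Phi}$ in $[0,1)\times(e_*,+\infty)$ is either finite, with at least one Lyapunov-unstable element, or the graph of a real analytic $1$-periodic function $h$ all of whose points are unstable. By Remark \ref{rem_periodic} applied with $q=1$, these fixed points are in bijection with the $(p,1)$-periodic orbits of $\tilde{\Phi}$ sitting in the strip, with the same stability; in the graph case, the parametrisation $(t(s),e(s))=(s,h(s))$ fits the definition of a degenerate set of $(p,1)$-periodic bouncing motions.

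The one point that really does need checking --- and what I regard as the main obstacle, modest as it is --- is that \emph{every} $(p,1)$-periodic bouncing motion lies in the strip $\AA_{e_*}$, so that nothing is missed by working on $\Sigma$. For any such orbit one has $t_{n+1}-t_n=p$, and the upper bound displayed above then forces $\sqrt{2e_n}\geq\tfrac{g}{2}p-2\norm{\df}$. After enlarging $\alpha$ if necessary this implies $e_n>e_*$, so every $(p,1)$-periodic bouncing motion corresponds to a fixed point of $\sigma^{-p}\circ\tilde{\Phi}$ that is covered by Theorem \ref{maro_teo}, and the proposition follows.
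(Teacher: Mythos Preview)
Your proposal is correct and follows essentially the same route as the paper: verify condition \eqref{maro_cond} for $\tilde\Phi$ via the elementary estimate $\bigl|\bar t-t-\tfrac{2}{g}\sqrt{2e}\bigr|\le\tfrac{4}{g}\norm{\df}$, then invoke Theorem \ref{maro_teo}. The only cosmetic difference is that the paper takes the lower edge of the strip at some $a>e_*$ (so that $\bar t(t,a)$ is unambiguously defined), whereas you set $a=e_*$ and evaluate $\bar t(t,e_*)$ at the boundary of the domain; your final paragraph, which forces $e_n$ to be large once $p>\alpha$, is exactly what lets one push $a$ slightly above $e_*$ without losing any $(p,1)$-periodic motion, so the two arguments are interchangeable.
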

\begin{proof}
  By Lemma \ref{lemma:exact} the map $\tilde{\Phi}$ is exact symplectic and twist on $\RR^2_{e_*}$. Moreover, let us choose $a$ such that $e_*<\sqrt{2a}$ and $p$ such that
  \[
\frac{gp}{2}-2\norm{\dot{f}}>\sqrt{2a}.
\]
Let us start with the following estimates for the lift $\tilde{\Phi}$ that can be easily proved by induction on $n$ from \eqref{eq:unbe}:
  \begin{equation}\label{stim_t_e}
  	|\sqrt{2e_n}-\sqrt{2e}|\leq 4n\norm{\dot{f}}, \qquad \left|t_n-t-\frac{2}{g}n\sqrt{2e}\right|\leq 4n^2\frac{\norm{\dot{f}}}{g}.  
  \end{equation}
  These give
  \[
\bar{t}(t,a)-t\leq \frac{2}{g}\sqrt{2a}+4\frac{\norm{\dot{f}}}{g}<p.
\]
On the other hand, Lemma \ref{lemma:exact} also gives that $\tilde{\Phi}$ twists the upper end infinitely, i.e. $\lim_{e\rightarrow +\infty}\bar{t}(t,e)-t=+\infty$ uniformly in $t$. Hence, condition \eqref{maro_cond} is satisfied in the strip $\Sigma=[a,+\infty)$ for every $p$. The conclusion comes from the application of Theorem \ref{maro_teo} and the fact that $(p,1)$-periodic complete bouncing motions corresponds to the fixed points of the map $\sigma^{-p}\circ\tilde{\Phi}$.
  \end{proof}

This result is not trivially extended to $(p,q)$-periodic motions for $q\geq 1$ since $\Phi^q$ need to be exact symplectic and twist. The twist condition is in general not preserved by composition, while the exactness is, as shown in the following result, inspired by \cite{bosc_ort}
\begin{lemma}\label{lemma_isot_exact}
For every $q>0$ there exists $e_\#\geq e_*$ such that for every $p>0$ the map $\sigma^{-p}\circ\tilde{\Phi}^q:\RR^2_{e_\#}\rightarrow\RR^2$ is exact symplectic.% and isotopic to the inclusion.  
  \end{lemma}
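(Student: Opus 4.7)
The plan is to split the proof into two ingredients: first ensure that $\tilde{\Phi}^q$ is well defined on a suitable strip via a domain-invariance argument, and then exhibit an explicit global primitive for the composed map, noting that post-composing with the translation $\sigma^{-p}$ does not change that primitive.

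For the domain, I would invoke the a priori estimate $|\sqrt{2e_n}-\sqrt{2e_0}|\leq 4n\norm{\df}$ appearing in (\ref{stim_t_e}) and set
\[
e_\# = \tfrac{1}{2}\bigl(\sqrt{2e_*}+4q\norm{\df}\bigr)^2.
\]
Then any $(t_0,e_0)\in\RR^2_{e_\#}$ has $e_n\geq e_*$ for $0\leq n\leq q$, so that all of the first $q$ iterates remain in $\RR^2_{e_*}$, which is precisely the region where Lemma \ref{lemma:exact} guarantees that $\tilde{\Phi}$ is defined and exact symplectic. Crucially, this threshold $e_\#$ depends on $q$ but not on $p$, as required by the statement.

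For exact symplecticity, let $V:\RR^2_{e_*}\to\RR$ denote the $\ZZ$-periodic primitive of $\tilde{\Phi}$, characterised by $\bar{e}\,d\bar{t}-e\,dt=dV(t,e)$ and $V\circ\sigma=V$. On $\RR^2_{e_\#}$ I would define
\[
W(t,e) = \sum_{n=0}^{q-1} V\bigl(\tilde{\Phi}^n(t,e)\bigr),
\]
and a telescoping sum gives $e_q\,dt_q-e_0\,dt_0 = dW(t_0,e_0)$, so $W$ is a primitive for $\tilde{\Phi}^q$. The equivariance $\tilde{\Phi}^n\circ\sigma=\sigma\circ\tilde{\Phi}^n$ together with $V\circ\sigma=V$ yields $W\circ\sigma=W$, so $W$ is $\ZZ$-periodic in the $t$ variable. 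Finally, $\sigma^{-p}(t,e)=(t-p,e)$ leaves the one-form $e_q\,dt_q - e_0\,dt_0$ invariant because $d(t_q-p)=dt_q$; hence $\sigma^{-p}\circ\tilde{\Phi}^q$ is exact symplectic on $\RR^2_{e_\#}$ with the very same primitive $W$.

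I do not expect any substantial obstacle here: the estimate from (\ref{stim_t_e}) makes the domain question mechanical, and the primitive bookkeeping is forced by the telescoping identity. The one observation that really matters, and which is what makes the threshold $e_\#$ independent of $p$, is that the horizontal translation $\sigma^{-p}$ is itself exact symplectic with trivial primitive, so composing with it on the left contributes nothing to the action.
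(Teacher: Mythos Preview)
Your proposal is correct and follows essentially the same route as the paper: the same threshold $e_\#=\tfrac12(\sqrt{2e_*}+4q\norm{\df})^2$ (the paper writes it as $\tfrac12 v_\#^2$ with $v_\#=v_*+4q\norm{\df}$, working in the $(t,v)$ variables), the same telescoping primitive $W=\sum_{n=0}^{q-1}V\circ\tilde{\Phi}^n$, and the same observation that $\sigma^{-p}$ pulls back $e\,dt$ to itself. The only cosmetic difference is that the paper justifies the domain step directly from the one-step bound $|\bar v-v|\leq 4\norm{\df}$ on \eqref{eq:unb}, whereas you cite the $n$-step estimate \eqref{stim_t_e}; just be aware that \eqref{stim_t_e} itself presumes the iterates stay in the good strip, so strictly speaking the domain argument should be phrased as a joint induction (if $e_0,\dots,e_{n-1}>e_*$ then $\tilde\Phi^n$ is defined and $e_n>e_*$), which is immediate.
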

\begin{proof}
  Since the map $\tilde{\Phi}$ is defined in $\RR^2_{e_*}$, the image $\tilde{\Phi}(\RR^2_{e_*})$ could not be contained in $\RR^2_{e_*}$ so that the iterate could not be defined. From \eqref{eq:unb} we have that $|\bar{v}-v|\leq 4\norm{\dot{f}}$ from which, the map $\tilde{\Psi}^q$ is well defined in $\RR^2_{v_\#}$ with $v_\#=v_* + 4 q \norm{\dot{f}}$. Hence passing to the variables $(t,e)$, the map $\tilde{\Phi}^q$ is defined in $\RR^2_{e_\#}$ with $e_\#=\frac{1}{2}v_\#^2$.
  %Moreover, the map $\sigma^{-p}$ is trivially isotopic to the identity and the composition of maps isotopic to the identity is still isotopic to the identity. Hence, as done in remark \ref{rem:isotop} the extended map $\sigma^{-p}\circ\tilde{\Phi}^q$ is isotopic to the identity and its restriction to $\AA_{e_\#}$ is isotopic to the inclusion. Finally, the map $\sigma^{-p}\circ\Phi^q$ is also exact symplectic. Actually,
  Since $\Phi$ is exact symplectic, there exists $V:\RR^2_{e_\#}\rightarrow\RR$ such that, defining $\lambda=e dt$ we have $\tilde{\Phi}^*\lambda-\lambda=dV$. Hence, denoting $V_1=V+V\circ\tilde{\Phi}+\dots+V\circ\tilde{\Phi}^{q-1}$ it holds that $V_1\circ\sigma=V_1$ on $\RR^2_{e_\#}$  and
  \begin{align*}
    dV_1 &= dV+\tilde{\Phi}^*dV+\dots+(\tilde{\Phi}^{q-1})^*dV \\
    & = \tilde{\Phi}^*\lambda-\lambda + (\tilde{\Phi}^2)^*\lambda-\tilde{\Phi}^*\lambda             +\dots+(\tilde{\Phi}^q)^*\lambda-(\tilde{\Phi}^{q-1})^*\lambda \\
    &= (\tilde{\Phi}^q)^*\lambda-\lambda
    \end{align*}
  from which $\tilde{\Phi}$ is exact symplectic. Finally, we conclude noting that by the definition of $\sigma^{-p}$,  $(\sigma^{-p}\circ\tilde{\Phi}^q)^*\lambda = (\tilde{\Phi}^q)^*\lambda$.  
\end{proof}

Concerning the twist condition the following technical result holds.
\begin{lemma}\label{twist_q}
  Let $f$ be $C^2$. For every $q\geq 1$ there exist $\epsilon_q>0$, $e^q>e_\#$, such that if $\norm{\ddf}<\epsilon_q$ then
  \[
\frac{\partial t_q}{\partial e}=\frac{2q}{g\sqrt{2e}}(1+\tilde{f}_q(t,e))
  \]
where $|\tilde{f}_q(t,e)|< 1/2$ on $\RR^2_{e^q}$.
  \end{lemma}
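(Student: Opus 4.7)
The plan is to argue by induction on $q$, controlling in parallel the four partial derivatives of $\tilde{\Phi}^q$ in the $(t,e)$ coordinates. Introduce the divided difference $h(s,\bar{s}) = (f(\bar{s}) - f(s))/(\bar{s} - s)$; by the mean value theorem applied twice, both $h_s$ and $h_{\bar{s}}$ are bounded in absolute value by $\norm{\ddf}$.

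For the base case $q=1$, I would implicitly differentiate the first equation of \eqref{eq:unbe} with respect to $e$ to obtain
\[
\left(1 + \frac{2}{g}\,h_{\bar{t}}(t,\bar{t})\right)\frac{\partial \bar{t}}{\partial e} = \frac{2}{g\sqrt{2e}},
\]
which yields $\partial \bar{t}/\partial e = (2/(g\sqrt{2e}))(1 + \tilde{f}_1)$ with $|\tilde{f}_1| \leq (2\norm{\ddf}/g)/(1 - 2\norm{\ddf}/g)$, which is less than $1/2$ once $\norm{\ddf}$ is small enough. Analogous implicit differentiations of the two equations of \eqref{eq:unbe} give $\partial \bar{t}/\partial t = 1 + O(\norm{\ddf})$, $\partial \bar{e}/\partial e = 1 + O(\norm{\ddf})$, and $\partial \bar{e}/\partial t = O(\sqrt{2e}\,\norm{\ddf})$, uniformly in the relevant strip.

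For the inductive step, the chain rule at step $q$ gives
\[
\frac{\partial t_q}{\partial e} = \frac{\partial t_q}{\partial t_{q-1}}\cdot\frac{\partial t_{q-1}}{\partial e} + \frac{\partial t_q}{\partial e_{q-1}}\cdot\frac{\partial e_{q-1}}{\partial e}.
\]
Substituting the base-case estimates applied at $(t_{q-1}, e_{q-1})$ together with the inductive hypothesis on $\partial t_{q-1}/\partial e$ and $\partial e_{q-1}/\partial e$, and using \eqref{stim_t_e} which gives $\sqrt{2e_{q-1}} = \sqrt{2e}\,(1 + O(q\norm{\df}/\sqrt{2e}))$, the right-hand side collapses to $(2q/(g\sqrt{2e}))(1 + \tilde{f}_q)$. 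The other three inductive estimates (in particular $\partial e_q/\partial e = 1 + O(q\norm{\ddf})$) are propagated by the analogous chain-rule decomposition.

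The main obstacle is that only $\norm{\ddf}$ is required to be small while $\norm{\df}$ is unrestricted. This forces the lower bound $e^q$ on the energy to be chosen large, depending on $q$ and $\norm{\df}$, so that the iterates $(t_n, e_n)$ for $0 \leq n \leq q$ remain in $\RR^2_{e_\#}$ (ensuring $\tilde{\Phi}^q$ is defined by Lemma \ref{lemma_isot_exact}) and the ratio $\sqrt{2e_n}/\sqrt{2e}$ stays close to $1$. The constants $\epsilon_q$ and $e^q$ must be chosen jointly, with $\epsilon_q$ decreasing and $e^q$ increasing in $q$, so that the multiplicative errors accumulated over the $q$ bounces combine to keep $|\tilde{f}_q| < 1/2$.
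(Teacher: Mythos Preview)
Your approach is correct and reaches the same conclusion, but it is organized differently from the paper's proof. The paper first changes variables to $y=\sqrt{2e}+\df(t)$, which removes the square root from the second equation and makes the induction cleaner: it then tracks only $\partial t_q/\partial y$ and $\partial y_{q-1}/\partial y$, using the closed formula $y_q=y+2\sum\df(t_i)+2\sum f[t_{i-1},t_i]$ to control the latter. A second difference is the bound on the divided-difference derivatives: you use $|h_s|,|h_{\bar s}|\le\norm{\ddf}$ (a direct second-order Taylor estimate), whereas the paper instead exploits the large time gap $t_q-t_{q-1}\gtrsim y$ to get $|\partial_{t_q}f[t_{q-1},t_q]|\le C_q/y$, a bound that decays in $y$ and involves only $\norm{\df}$. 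Both routes work; yours is more mechanical (full Jacobian via chain rule), while the paper's change of variable isolates more transparently where $\norm{\ddf}$-smallness is genuinely needed (only in the $\ddf(t_i)$ terms of $\partial y_{q-1}/\partial y$). One small caveat: your stated base-case estimate $\partial\bar e/\partial e=1+O(\norm{\ddf})$ is not quite complete, since $\partial\bar e/\partial e=(\sqrt{2\bar e}/\sqrt{2e})(1+O(\norm{\ddf}))$ carries an extra $O(\norm{\df}/\sqrt{2e})$ correction; you do absorb this at the end by taking $e^q$ large, but it would be cleaner to record it explicitly in the inductive bookkeeping.
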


\begin{proof}
  To simplify the computation, let us perform the change of variables $y=\sqrt{2e}+\df(t)$ so that \eqref{eq:unbe} becomes
\begin{equation}\label{eq:unby}
  \left\{
  \begin{split}
\Pto =  & \to + \frac 2g y-\frac 2g f[\to,\Pto]
\\
\bar{y} =  & y  - 2f[\to,\Pto] + 2\df (\Pto).
\end{split}
\right.
\end{equation}
Since $\partial t_q/\partial e=(\partial t_q/\partial y) (\partial y/\partial e)$ it is enough to prove that for every $q\geq 1$ there exist $\epsilon_q>0$ and $y^q$ large, such that if $\norm{\ddf}<\epsilon_q$ then 
  \begin{equation}\label{new_th}
\frac{\partial t_q}{\partial y}=\frac{2q}{g}(1+\tilde{f}_q(t,y))
\end{equation}
where $|\tilde{f}_q(t,y)|< 1/2$ on $\RR^2_{y^q}$.

Let us start with some estimates that hold for every $q\geq 1$. It comes from \eqref{eq:unby} that 
\begin{equation}\label{y_q}
y_q=y+2\sum_{i=1}^{q}\df(t_i)+2\sum_{i=1}^{q}f[t_{i-1},t_{i}]
\end{equation}
so that
\[
|y_q-y|\leq 4q\norm\df.
\]
Using it,
\begin{equation}\label{t_q}
|t_q-t_{q-1}|\geq \frac{2}{g}y-\frac{2}{g}(4q+1)\norm\df,
\end{equation}
from which there exists $y^q$ large enough and $C_q$ such that 
\begin{equation}\label{fqq-11}
\left|\partial_{t_q}f[t_{q-1},t_q]\right|=\left|\frac{\dot{f}(t_q)-f[t_{q-1},t_q]}{t_q-t_{q-1}}\right|\leq
\frac{g\norm{\dot{f}}}{y-(4q+2)\norm{\dot{f}}}<\frac{C_q}{y} \qquad \mbox{on } \RR^2_{y^q}
\end{equation}
and analogously
\begin{equation}\label{fqq-12}
\left|\partial_{t_{q-1}}f[t_{q-1},t_q]\right|<\frac{C_{q-1}}{y} \qquad \mbox{on } \RR^2_{y^{q-1}}
\end{equation}
  Now we can start the proof by induction on $q\geq 1$. For $q=1$ we have, differentiating the first equation in \eqref{eq:unby}
  \begin{equation}\label{t_1_y}
\frac{\partial t_1}{\partial y}\left(1+\frac{2}{g}\partial_{t_{1}}f[t,t_1]\right)=\frac{2}{g}y
\end{equation}
  from which, using \eqref{fqq-11} we get the initial step taking a suitably larger value of $y^1$.\\
For the inductive step, let us suppose \eqref{new_th} to be true for $i=1,\dots,q-1$. By implicit differentiation

\begin{equation}\label{t_q_e}
\frac{\partial t_q}{\partial y}\left(1+\frac{2}{g}\partial_{t_{q}}f[t_{q-1},t_q]\right)=\frac{2}{g}\frac{\partial y_{q-1}}{\partial y}+\frac{\partial t_{q-1}}{\partial y}\left(1-\frac{2}{g}\partial_{t_{q-1}}f[t_{q-1},t_q]\right)
\end{equation}
From \eqref{y_q} and the inductive hypothesis we have
\begin{align*}
  \frac{\partial y_{q-1}}{\partial y}&=1+2\sum_{i=1}^{q-1}\left(\ddf(t_i) \frac{\partial t_i}{\partial y}-\partial_{t_{i}}f[t_{i-1},t_{i}]\frac{\partial t_i}{\partial y} -\partial_{t_{i-1}}f[t_{i-1},t_{i}]\frac{\partial t_{i-1}}{\partial y}\right)\\
   &=1+2\sum_{i=1}^{q-1}\left(\ddf(t_i) (1+\tilde{f}_i)-\partial_{t_{i}}f[t_{i-1},t_{i}](1+\tilde{f}_i) -\partial_{t_{i-1}}f[t_{i-1},t_{i}](1+\tilde{f}_{i-1})\right).
  \end{align*}
Since by (\ref{fqq-11}-\ref{fqq-12}) for every $i$ $|\partial_{t_{i}}f[t_{i-1},t_{i}]|$ tends to zero uniformly as $y \rightarrow +\infty$ and $|\tilde{f}_i|<1/2$ for $y$ large, we can find new constants $C_{q-1}$ and $y^{q-1}$ such that on $\RR^2_{y^{q-1}}$,
\begin{equation}\label{stim_y}
\frac{\partial y_{q-1}}{\partial y} = 1 + \bar{f}_{q-1}(t,y)  \qquad \mbox{with } |\bar{f}_{q-1}|\leq C_{q-1}\norm\ddf.
\end{equation}
Using it and the inductive hypothesis in \eqref{t_q_e} we get
\begin{align}\label{final}
  \frac{\partial t_q}{\partial y}\left(1+\frac{2}{g}\partial_{t_{q}}f[t_{q-1},t_q]\right)&=\frac{2}{g}(1 + \bar{f}_{q-1}(t,y))+\frac{2(q-1)}{g}(1+\tilde{f}_{q-1}(t,y))\left(1-\frac{2}{g}\partial_{t_{q-1}}f[t_{q-1},t_q]\right)\\
  &=\frac{2q}{g}\left(1+ \tilde{f}_q(t,y)\right)
\end{align}
where,
\[
\tilde{f}_q(t,y)=\frac{1}{q}\bar{f}_{q-1}(t,y)+\frac{q-1}{q}\tilde{f}_{q-1}(t,y)+\frac{2(q-1)}{gq}\partial_{t_{q-1}}f[t_{q-1},t_q](1+\tilde{f}_{q-1}(t,y)).
\]
Now (\ref{fqq-11}-\ref{fqq-12}) and \eqref{stim_y} imply 
\[
|\tilde{f}_q|<\frac{C_{q-1}}{q}\norm\ddf+\frac{q-1}{2q}+\frac{C'_{q-1}}{y}
\]
so that we can find $\epsilon_q$ and $y_q$ such that if $\norm\ddf<\epsilon_q$ then $|\tilde{f}_q|<\frac{1}{2}-\frac{1}{2q}$ on $\RR^2_{y^{q}}$. Plugging it into \eqref{final} and using again (\ref{fqq-11}-\ref{fqq-12}) we get the thesis, eventually increasing $y^q$.

\end{proof}

This is used to prove the following result
\begin{proposition}
Suppose that $f$ is real analytic. For every $q>0$ there exist $\alpha>0$ and $\epsilon_q>0$ such that if $p>\alpha$ and $\norm{\ddf}<\epsilon_q$ then the set of $(p,q)$-periodic complete bouncing motion is either finite or degenerate. In the first case at least one $(p,q)$-periodic complete bouncing motion is unstable. In the degenerate case, all $(p,q)$-periodic complete bouncing motion are unstable.  
\end{proposition}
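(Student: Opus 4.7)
The plan is to apply Theorem \ref{maro_teo} with $\tilde{S}=\tilde{\Phi}^q$, so that fixed points of $\sigma^{-p}\circ\tilde{\Phi}^q$ correspond, via Remark \ref{rem_periodic}, to the $(p,q)$-periodic complete bouncing motions sought. This mirrors the strategy of the previous proposition; the novelty for $q>1$ is that the exact symplectic and twist properties of the iterate are no longer automatic, and are precisely what Lemmas \ref{lemma_isot_exact} and \ref{twist_q} provide, the latter requiring the smallness assumption $\norm{\ddf}<\epsilon_q$.

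I would work on the strip $\Sigma=\RR\times(a,+\infty)$ with $a=\max\{e_\#,e^q\}$. Lemma \ref{lemma_isot_exact} gives that $\tilde{\Phi}^q$ is exact symplectic on $\Sigma$, Lemma \ref{twist_q} gives the twist condition there, and Lemma \ref{well_def} (applied to the real analytic $f$) gives real analyticity; composition with $\sigma^{-p}$ preserves all three properties since it acts as a unit translation on the $t$-coordinate alone. The remaining hypothesis of Theorem \ref{maro_teo}, namely condition \eqref{maro_cond}, amounts to two inequalities for $\bar{x}=t_q$: the left inequality $t_q(x,a)<x+p$ follows from estimate \eqref{stim_t_e} (established inductively in the previous proof and equally valid here) as soon as $\alpha$ is chosen larger than $(2q/g)\sqrt{2a}+(4q^2/g)\norm{\df}$, and the right inequality is supplied by the same estimate, which forces $t_q(x,e)-x\to+\infty$ as $e\to+\infty$ uniformly in $x$.

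With these verifications in place, Theorem \ref{maro_teo} delivers the dichotomy at once: either finitely many fixed points of $\sigma^{-p}\circ\tilde{\Phi}^q$, at least one of which is unstable, or the graph of a real analytic $1$-periodic function, every point of which is unstable. Since $a\geq e_\#$, the iterate $\tilde{\Phi}^q$ on $\Sigma$ coincides with the genuine $q$-fold iterate of the bouncing map of Section \ref{sec:tennis}, so these fixed points correspond to bona fide $(p,q)$-periodic complete bouncing motions and the graph alternative is exactly the notion of degeneracy of the Definition. The main obstacle in this approach has already been overcome in Lemma \ref{twist_q}, where the twist condition for the iterate forced the smallness hypothesis on $\norm{\ddf}$; once that is in hand, what remains is a direct transcription of the proof for $q=1$.
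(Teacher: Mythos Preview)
Your proposal is correct and follows essentially the same route as the paper's proof: apply Theorem \ref{maro_teo} to $\tilde{\Phi}^q$, using Lemma \ref{lemma_isot_exact} for exactness, Lemma \ref{twist_q} for the twist, and estimate \eqref{stim_t_e} to verify \eqref{maro_cond}. The only cosmetic difference is that the paper works on a bounded strip $\Sigma=\RR\times(a,b)$ with $a,b$ chosen so that $\sqrt{2a}<\tfrac{gp}{2q}-2q\norm{\df}<\tfrac{gp}{2q}+2q\norm{\df}<\sqrt{2b}$, whereas you take $b=+\infty$ as in the $q=1$ case; both choices verify \eqref{maro_cond} and, by the same estimate, force every $(p,q)$-periodic orbit to lie in the strip.
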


\begin{proof}
  We would like to apply Theorem \ref{maro_teo} to the map $\tilde{\Phi}^q$, noting that $(p,q)$-periodic bouncing motions correspond to fixed points of the map $\sigma^{-p}\circ\tilde{\Phi}^q$. Let us fix $q>0$. In Lemma \ref{lemma_isot_exact} we proved that $\tilde{\Phi}^q$ is exact symplectic in $\RR^2_{e_\#}$ for some $e_\#$ depending on $q$. Moreover, by Lemma \ref{twist_q}, there exist $\epsilon_q$ and $e^q>e_\#$ such that if $\norm{\ddf}<\epsilon_q$ then $\tilde{\Phi}^q$ is also twist on $\RR^2_{e^q}$. Now choose $p>0$ such that
  \[
  \frac{gp}{2q}-2q\norm{\dot{f}}>e^q.
  \]  
   Hence, there exist $b>a$ such that
  \[
  e^q<\sqrt{2a}<\frac{gp}{2q}-2q\norm{\dot{f}}<\frac{gp}{2q}+2q\norm{\dot{f}}<\sqrt{2b}.
  \] 
  This choice for $a,b$ gives condition \eqref{maro_cond} on the strip $\Sigma=\RR\times (a,b)$. Actually, from \eqref{stim_t_e},
  \begin{align*}
  t_q(t,b)-t & \geq\frac{2}{g}q\sqrt{2b}-4q^2\frac{\norm{\dot{f}}}{g}>p \\
  t_q(t,a)-t & \leq\frac{2}{g}q\sqrt{2a}+4q^2\frac{\norm{\dot{f}}}{g}<p.
  \end{align*}
  This concludes the proof.
  \end{proof} 

%\section*{Acknowledgements}
%The author would like to thank the unknown referee for several valuable advice that significantly improved the final version of the paper.

\end{document}